\providecommand{\keywords}[1]
{
  	
  \textbf{\textit{Keywords---}} #1
}
\providecommand{\chapter}[1]
{
  	
  \textbf{\textit{Chapter---}} #1
}
\newtheorem{thm}{Theorem}[section]
\newtheorem{prop}[thm]{Proposition}
\newtheorem{cor}[thm]{Corollary}
\newtheorem{lem}[thm]{Lemma}
\theoremstyle{remark}
\newtheorem{rmk}[thm]{Remark}
\theoremstyle{definition}
\newtheorem{defn}{Definition}[section]
\newtheorem*{assumptionH}{Assumption H}
\DeclareMathOperator{\N}{\mathbb{N}}
\DeclareMathOperator{\R}{\mathbb{R}}
\DeclareMathOperator{\diam}{diam}
\DeclareMathOperator{\cH}{\mathcal{H}}
\DeclareMathOperator{\cD}{\mathcal{D}}
\DeclareMathOperator{\cL}{\mathcal{L}}
\DeclareMathOperator{\cB}{\mathcal{B}}
\newcommand{\Norm}[2]{\left\Vert #1 \right\Vert_{#2}}
\begin{document}
\title{Duality and distance formulas in Lipschitz - H\"{o}lder spaces}

\author[1]{Francesca Angrisani \thanks{francesca.angrisani@unina.it}}
\author[1]{Giacomo Ascione \thanks{giacomo.ascione@unina.it}}
\author[2]{Luigi D'Onofrio \thanks{Corresponding Author: donofrio@uniparthenope.it}}
\author[1]{Gianluigi Manzo\thanks{gianluigi.manzo@unina.it}}
\affil[1]{{Dipartimento di Matematica ed Applicazioni \lq\lq R. Caccioppoli\rq\rq   Universit\'a di Napoli \lq\lq Federico II\rq\rq}}
\affil[2]{Dipartimento di Scienze e Tecnologie, Universit\'a di Napoli \lq\lq Parthenope \rq\rq }
\date{} 
\maketitle
\begin{abstract}
	\noindent
	For a compact metric space $(K,\rho)$, the predual of $Lip(K,\rho)$ can be identified with the normed space $M(K)$ of finite (signed) Borel measures on $K$ equipped with the Kantorovich-Rubinstein norm, this is due to Kantorovich \cite{kantorovich1942mass}. Here we deduce atomic decomposition of $\mathcal M(K)$ by mean of some results from \cite{d2019atomic}. It is also known, under suitable assumption,  that there is a natural isometric isomorphism between $ Lip(K,\rho)$ and $(lip(K, \rho))^{**}$ \cite{Hanin}.  In this work we also show that the pair $(lip(K,\rho),Lip(K,\rho))$ can be framed in the theory of o--O type structures introduced by K. M. Perfekt.
\end{abstract}
\textup{2010} \textit{Mathematics Subject Classification}: \textup{47B44, 47G10, 26A16, 54E45} \\
\keywords{Duality, bi-Duality, Lipschitz Spaces, Compact Metric Spaces, Distance} \\
\chapter{Chapter: 15 Functional Analysis}
	\section{Introduction}
	Given a metric space $(K,\rho)$ we consider the Lipschitz space $$Lip(K,\rho)=\{f:K\to\R\mid\exists L>0:\forall x,y\in K\,\,|f(x)-f(y)|\le L\rho(x,y)\}$$ (we omit $\rho$ if $K \subset \R^n$ and $\rho$ is the Euclidean distance). The quantity
		\begin{equation}
		[f]_{1}:=\sup_{x,y\in K}\frac{|f(x)-f(y)|}{\rho(x,y)}
		\end{equation}
		is a seminorm in $Lip(K,\rho)$, and it can be made into a norm by either considering functions modulo a constant or by adding a suitable quantity, such as the value of the function in a point or the supremum norm. In the following, unless specified otherwise, we will follow the latter convention:
		\begin{equation}
		\|f\|_{1}:=\max\{[f]_{1},\|f\|_\infty\}
		\end{equation}
		where $\|f\|_\infty:=\sup_{x\in K}|f(x)|$.
		\\
		The Lipschitz spaces can be generalized by considering the composition of the distance with a subadditive function $\omega$. In particular, we will consider the spaces \begin{equation}
		Lip_\alpha(K,\rho):=\left\{f:[f]_{Lip_\alpha}:=\sup_{x,y\in K}\frac{|f(x)-f(y)|}{\rho^\alpha(x,y)} <\infty \right\},
		\end{equation}
		with $0<\alpha\le 1$, corresponding to the modulus of continuity $\omega(t)=t^\alpha$. We remark that since $\rho^\alpha$ is still a metric, $Lip_\alpha(K,\rho)$ can in fact be seen as $Lip(K,\rho^\alpha)$; conversely, the space $Lip(K,\rho)$ can be seen as $Lip_\alpha(K,\rho)$ with $\alpha=1$ (i.e. $\omega(t)=t$).\\
		An important subspace of $Lip$ is the subspace
		\begin{equation}
		lip(K,\rho):=\left\{f\in Lip(K,\rho):\limsup_{\rho(x,y)\to 0}\frac{|f(x)-f(y)|}{\rho(x,y)}=0\right\}.
		\end{equation}
		In many cases, such as in the case $(\R^n,d)$, where $d$ is the Euclidean distance, the space $lip$ is trivial, but this is not the case for $lip_\alpha(K,\rho)=lip(K,\rho^\alpha)$, where $0<\alpha<1$, since an easy computation shows that it contains all $Lip_\beta(K,\rho)$ with $\alpha<\beta\le 1$ \cite{Hanin}. The importance of this subspace is that in many cases we have that
		\begin{equation}
		(lip(K,\rho))^{**}=Lip(K,\rho)
		\end{equation}
		where equality is intended as spaces being canonically isometric. De Leeuw in \cite{de1961banach} first proved that, if $0<\alpha<1$, then $(lip_\alpha([0,1]))^{**}$ is isometrically isomorphic to $Lip_\alpha([0,1])$. In (1974) Wulbert \cite{wulbert1974representations} extended de Leeuw's theorem to finite dimensional compact sets $ K$
		\begin{equation}\label{040801}
		(lip_\alpha( K))^{**}\simeq Lip_\alpha( K)
		\end{equation}
		Let us notice that de Leeuw identified the\emph{ dual} of $lip_\alpha([0,1])$ , by constructing an isometric embedding of $lip_\alpha([0,1])$ into a space $C_0(W)$ of continuous function on $W=U \cup V$
		\begin{equation}\label{040802}
		U=\{\rho\in \R: 0\leq \rho\leq 1\}
		\end{equation}
		\begin{equation}\label{040802}
		V=\left\{(\sigma, \tau)\in \R^2 : 0\leq \sigma\leq 1, 0<\tau-\sigma \leq \frac 1 2\right\}
		\end{equation}
		that are zero at infinity, equipped with the supremum norm, and then using Riesz representation Theorem. The idea was to find a normed linear space $H$ such that $H^*=Lip_\alpha$ and $(H)^c= (lip_\alpha)^*$, where $(\cdot)^c$ denotes the completion, a space of finite Borel measures on $W$, namely $H=M (W)$ with an appropriate norm. A more general version of this duality, following a similar approach, is due to Hanin \cite{Hanin} (see also \cite{bade1987amenability}) for $(K, \rho)$ a compact metric space. For this, he considered the \textit{normed} space $H=M(K)$ of all finite Borel measures $\mu$ on $ K$ equipped with the Kantorovich-Rubistein norm \cite{kantorovich1942mass,kantorovich1957functional,kantorovich1958space}. The reason why classical norm of $\mu \in M( K)$ given by total variation, was excluded by Hanin are the unclear description of the dual of $M$ with respect to such a strong norm in $BV$ and the fact that such a norm is unrelated with distance $\rho$. For example choosing the delta measures of Dirac supported at $x,y\in  K$ respectively
		$$
		|\delta_x-\delta_y|(K)=2
		$$
		while the Kantorovich-Rubinstein norm (KR-norm in short) on $M (K)=(M (K,\rho), ||\cdot||_{\rho})$ satisfies:
		$$
		||\delta_x-\delta_y||_{\rho}=\rho(x,y).
		$$
		Thus for $K$ infinite set, the space $\mathcal (K)$ is not complete.
		Let us first introduce this new norm in the space $M_0(K)$ of signed measures $\nu$ vanishing on $K$: $\nu(K)=0$. To any such a measure $\nu$ we associate the family $\Psi_{\nu}$ of all nonnegative measures $\psi\in M(K \times K)$ such that for any Borel $F\subset K$ \emph{the balance condition}
		\begin{equation}\label{040803}
		\psi(K, F) - \psi(F,K)= \nu(F)
		\end{equation}
		holds, where $\psi (F_1,F_2)$ can be interpreted as mass carried from $F_1$ to $F_2$ while $\psi \in \Psi_{\nu}$ is mass transferred on $K$ with given $\nu_{-}$ and required $\nu_+$. The norm of $\nu \in M_0( K)$ is defined by infconcolution: 
		\begin{equation}\label{040804}
		||\nu||_{\rho}^0= \inf_{\psi\in\Psi_{\nu}} \int\int_{K \times K} \rho(x,y)d\nu(x,y)
		\end{equation}
		and its value with the corresponding optimal transfer gives the solution to the mass transfer problem from $\mu^-$ to $\mu^+$ with cost $\rho$ (see \cite{gigli2011user} and reference therein); moreover for generic $\mu \in M (K)$ by
		\begin{equation}\label{040805}
		||\mu||_{\rho}=\inf_{\nu\in M_0(K)}\{||\nu||_{\rho}^0+|\mu -\nu|(K) \}
		\end{equation}
		It is worth noting that the infimum in \eqref{040804} is the same if taken over a smaller set $\bar{\Psi}_{\nu}$ of measures $\psi\in M_{+}(K \times K)$ such that
		$$
		\psi(\cdot,K)=\nu_{-}\,\,\,\psi(K, \cdot)=\nu_{+}
		$$
		where $\nu_{-}$ and $\nu_{+}$ are the negative and the positive variation of $\nu$
		and the set of measures with finite support is dense in $M(K)$ in the KR-norm.
		The main property of the space $M(K)$ endowed with the KR norm is the following duality relation (see, for instance, \cite[Theorem $0$]{Hanin}):
		\begin{equation}
		(M(K))^*=Lip(K,\rho)
		\end{equation}
		(alternatively, we also have $(M_0(K))^*=Lip(K,\rho)$, where in this case the functions in $Lip$ are considered modulo a constant).
		What Hanin did in \cite{Hanin} is to find a necessary and sufficient condition on $\rho$ to have this other duality relation:
		\begin{equation}
		(lip(K,\rho))^*=(M(K))^c,
		\end{equation}
		where $(\cdot)^c$ refers to the completion. This in particular implies the two star theorem for the pair $(lip,Lip)$.\\
		A family of distances that satisfies the assumption given by Hanin is given by $\{\tilde{\rho}^\alpha\}_{0<\alpha<1}$, where $\tilde{\rho}$ is a given distance, so in particular we obtain that $(lip_\alpha(K,\tilde{\rho}))^{**}=Lip_\alpha(K,\tilde{\rho})$, with the intermediate space being $M(K)$ endowed with the KR norm corresponding in this case to the mass transport with cost $\tilde{\rho}^\alpha$.\\
		In \cite{Perfekt} the pair $(lip_\alpha(K),Lip_\alpha(K))$ (considered modulo constants) is framed within a general type of ``o--O'' structure (see section 2) in the case $K\subset\R^n$. The o--O type structure introduced in \cite{Perfekt} is typical of several non-reflexive Banach spaces such as $BMO$ and $VMO$ (as done in the aforementioned paper), the Brezis-Bourgain-Mironescu space $B$ (introduced in \cite{bourgain2015new}) and its subspace $B_0$ (as done in \cite{d2019atomic}, see also \cite{DSSJEPE}, \cite{MR3262204}) and a particular class of Orlicz spaces (as done in \cite{angrisani2019orlicz}). This structure provides a general setting in which specific properties of Banach spaces can be shown, such as M-ideality of some subspaces (see \cite{perfekt2017m}) and a characterization of weak compact operators (see \cite{perfekt2015weak}). Concerning $lip_\alpha$ and $Lip_\alpha$ $0<\alpha<1$, in \cite{Perfekt} it is show that on compact subsets of $\R^n$ they exhibit a o--O structure, also obtaining as a consequence the M-ideality of $lip_\alpha$ in $Lip_\alpha$, which is a generalization of a result given in \cite{berninger2003lipschitz} for $K=[0,1]$. The Author also set the problem to generalize this result to a general compact metric space.\\
		The aim of this paper is to address Perfekt's question and show the $o$--$O$ structure and all of the subsequent properties for the pair $(lip(K,\rho),Lip(K,\rho))$ (with the norm $\|\cdot\|_{1}$) for a more general class of compact metric spaces. In particular we show that the necessary and sufficient conditions for the duality relation given in \cite{Hanin} are also necessary and sufficient for the $o$-$O$ structure in the class of doubling compact metric spaces. Moreover, by using the $o$-$O$ structure, we are able to show the atomic decomposition for the space $(lip_\alpha(K,\rho))^*\equiv (M(K))^c$.\\
		We remark that similar arguments work if we consider functions modulo constants, and this will give, among other things, an atomic decomposition for $(M_0(K))^c$.
	\section{$o$--$O$ structure for Banach spaces}
	\subsection{The $o$--$O$ structure}
Let us recall the definition of o--O structure for pair of Banach spaces.
\begin{defn}
	We say that a pair of Banach spaces $(E_0,E)$, where $E_0$ is a subspace of $E$, form a o--O structure if there exist
	\begin{itemize}
		\item a reflexive separable Banach space $X$;
		\item a Banach space $Y$;
		\item a family $\cL$ of bounded linear operators from $X$ to $Y$;
		\item a topology $\tau$ on $\cL$ that is $\sigma$-compact locally compact Hausdorff topology and such that the maps $T_x:\cL \to Y$ given by $T_x L=Lx$ for any $x \in X$ are continuous;
	\end{itemize}
	such that $E$ is given by
	\begin{equation*}
	E=\left\{x \in X: \ \sup_{L \in \cL}\Norm{Lx}{Y}<+\infty\right\},
	\end{equation*}
	it holds $\Norm{x}{E}=\sup_{L \in \cL}\Norm{Lx}{Y}$ and $E_0$ is given by
	\begin{equation*}
	E_0=\left\{x \in E: \ \limsup_{L \to \infty}\Norm{Lx}{Y}=0\right\}
	\end{equation*}
	where $L \to \infty$ is intended in the Alexandrov one point compactification of $(\cL, \tau)$.\\
	Moreover, we say that the o--O pair $(E_0,E)$ satisfies \textbf{assumption AP} if and only if for any $x \in E$ there exists a sequence $\{x_n\}_{n \in \N} \subseteq E_0$ such that $x_n \rightharpoonup x$ in $X$ and $\sup_{n \in \N}\Norm{x_n}{E}\le \Norm{x}{E}$.
\end{defn}
\begin{defn}[\cite{harmand2006m}]
	Let $X$ be a Banach space. A linear projection $P$ is called an $L$-projection if for any $x \in X$
	\begin{equation*}
	\Norm{x}{X}=\Norm{Px}{X}+\Norm{x-Px}{X}.
	\end{equation*}
	A closed subspace $J \subseteq X$ is called an $L$-summand if it is the range of an $L$-projection.\\
	Given a subspace $J \subseteq X$, the subspace \mbox{$J^\perp=\{x^* \in X^*: \ x^*(y)=0 \ \forall y \in J\}$} of $X^*$ is called annihilator of $J$.\\
	A closed subspace $J \subseteq X$ is called $M$-ideal in $X$ if its annihilator $J^\perp$ is an $L$-summand of $X^*$.
\end{defn}
K. M. Perfekt proves the following properties of this structure.
	\begin{thm}
	Under \textsc{\textbf{Assumption AP}}, $E_0^{**}$ is isometrically isomorphic to $E$.\\ It can also be proved $E_0^*$ is the \textbf{strongly unique predual} of $E$, that is any other Banach space that has dual space isometric to $E$ is itself isometric to $E_0$.\\
	Also, $E_0$ is an $M$-ideal in $E$ and the following distance formula holds:
	$$\text{dist}(x,E_0)_{E}={\limsup\limits_{ L \to \infty}} \|Lx\|_{Y}.$$
\end{thm}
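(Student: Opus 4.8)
The plan is to transfer everything to the model pair $\big(C_0(\mathcal{L},Y),\,C_b(\mathcal{L},Y)\big)$ of vanishing-at-infinity, respectively bounded, continuous $Y$-valued functions on $(\mathcal{L},\tau)$, where the statement reduces to the classical fact that $C_0$ is $M$-embedded. First I would set up the map $\iota\colon E\to C_b(\mathcal{L},Y)$, $\iota(x)(L):=Lx$: it is well defined and bounded because each $T_x$ is continuous and $\|x\|_E=\sup_L\|Lx\|_Y$, and it is isometric by the very definition of $\|\cdot\|_E$. The condition $\limsup_{L\to\infty}\|Lx\|_Y=0$ is exactly vanishing at infinity in the one-point compactification $\mathcal{L}^+$, so $\iota(E_0)=\iota(E)\cap C_0(\mathcal{L},Y)$. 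Thus $(E_0,E)$ is modelled on $(c_0,\ell^\infty)$ rather than on an arbitrary weak-$*$ dense pair such as $(c,\ell^\infty)$, and this distinction (vanishing versus merely convergent) is essential below.

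To prove $E_0^{**}\simeq E$ I would identify $E_0^{**}$ with the weak-$*$ closure of $\iota(E_0)$ inside $C_0(\mathcal{L},Y)^{**}$ (the bidual of the isometric inclusion $E_0\hookrightarrow C_0(\mathcal{L},Y)$) and show this closure equals $\iota(E)$. For $\iota(E)\subseteq\overline{\iota(E_0)}$ I would invoke \textbf{Assumption AP}: given $x\in E$ it provides $x_n\in E_0$ with $x_n\rightharpoonup x$ in $X$ and $\sup_n\|x_n\|_E\le\|x\|_E$; testing $\iota(x_n)-\iota(x)$ against any $\mu$ in $C_0(\mathcal{L},Y)^*=M(\mathcal{L},Y^*)$ and applying dominated convergence (the integrands $\langle L(x_n-x),\cdot\rangle$ tend to $0$ pointwise in $L$ and are bounded by $2\sup_n\|x_n\|_E$) shows $\iota(x_n)\to\iota(x)$ weak-$*$. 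For the reverse inclusion I would use the reflexivity of $X$: a net in $E_0$ whose images converge weak-$*$ may be taken norm-bounded in $E$, hence bounded in $X$ (the inclusion $E\hookrightarrow X$ being continuous), so it has a subnet $x_\alpha\rightharpoonup x\in X$; weak lower semicontinuity gives $\|Lx\|_Y\le\liminf\|Lx_\alpha\|_Y$, so $x\in E$ with controlled norm, while the weak-$*$ limit is forced to be $\iota(x)$. Since every map in play is isometric, the norms then agree automatically.

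The $M$-ideal assertion I would obtain by transporting the classical $M$-embeddedness of $C_0(\mathcal{L},Y)$ (cf.\ \cite{harmand2006m}) through $\iota$ and the identification $E=E_0^{**}$: concretely one checks that the $L$-projection realizing $C_0(\mathcal{L},Y)^{***}=C_0(\mathcal{L},Y)^*\oplus_1 C_0(\mathcal{L},Y)^\perp$ restricts to an $L$-projection on $E_0^{***}$, that is $E^*=E_0^*\oplus_1 E_0^\perp$, which says precisely that $E_0$ is an $M$-ideal in $E$. Because $E_0$ is then an $M$-ideal in its own bidual $E_0^{**}=E$, it is an $M$-embedded space, and the strong uniqueness of the predual $E_0^*$ of $E$ is the corresponding general theorem for such spaces (again \cite{harmand2006m}).

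Finally, the distance formula follows from this decomposition by duality. Since $(E/E_0)^*$ is the annihilator $E_0^\perp$, one has $\mathrm{dist}(x,E_0)_E=\|x+E_0\|_{E/E_0}=\sup\{|\langle x,\phi\rangle|:\phi\in E_0^\perp,\ \|\phi\|\le1\}$. The inequality $\mathrm{dist}(x,E_0)_E\ge\limsup_{L\to\infty}\|Lx\|_Y$ is immediate, since for $y\in E_0$ we have $\|x-y\|_E\ge\limsup_{L\to\infty}\|L(x-y)\|_Y=\limsup_{L\to\infty}\|Lx\|_Y$. For the reverse inequality I would identify $E_0^\perp$ — the singular summand in $E^*=E_0^*\oplus_1 E_0^\perp$ — with the functionals on $E$ carried by the point at infinity of $\mathcal{L}^+$, i.e.\ by $Y^*$-valued measures concentrated on $\{\infty\}$, and evaluate the supremum against such functionals. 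I expect this explicit description of $E_0^\perp$ and the evaluation of the supremum to be the main obstacle, since it is exactly here that the vanishing-at-infinity definition of $E_0$ together with the $\sigma$-compact, locally compact structure of $(\mathcal{L},\tau)$ — providing the exhaustion needed to localize near infinity — must be used in full.
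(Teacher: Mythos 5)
The paper itself gives no proof of this theorem: it is quoted from Perfekt \cite{Perfekt,perfekt2017m}, so your proposal can only be judged on correctness and against the cited arguments. Your first part (the bidual identification) is essentially the right argument and close in spirit to the cited one: the model $\iota\colon E\to C_b(\mathcal{L},Y)$ with $\iota(E_0)=\iota(E)\cap C_0(\mathcal{L},Y)$, Goldstine plus reflexivity of $X$ for surjectivity, and Assumption AP plus dominated convergence for $\iota(E)\subseteq\overline{\iota(E_0)}^{w*}$. Two points need repair, though both are fixable. First, the continuity of $E\hookrightarrow X$ is not part of the paper's definition and is not free; it follows from a closed-graph argument using that $(E,\|\cdot\|_E)$ is assumed to be a Banach space (if $x_n\to x$ in $E$ and $x_n\to z$ in $X$, then $Lx=Lz$ for every $L\in\mathcal{L}$, hence $z\in E$ and $\|x-z\|_E=0$). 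Second, in the surjectivity step you apply dominated convergence along a \emph{net}, which is invalid: dominated convergence is a sequential theorem. The standard fix is to test only against compactly supported $\mu\in C_0(\mathcal{L},Y)^*$, for which $x\mapsto\langle\iota(x),\mu\rangle$ belongs to $X^*$ (Banach--Steinhaus gives $\sup_{L\in K}\|L\|_{\mathcal{L}(X,Y)}<\infty$ on each compact $K$), so that it passes to the weak limit of the net; such $\mu$ are norm-dense in $C_0(\mathcal{L},Y)^*$ by inner regularity and $\sigma$-compactness, which forces the weak-$*$ limit to be $\iota(x)$.

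The genuine gap is in the $M$-ideal step, and it propagates to the distance formula. The ``classical fact'' you invoke --- that $C_0(\mathcal{L},Y)$ is $M$-embedded, i.e.\ an $M$-ideal in its \emph{bidual} --- is false for a general Banach space $Y$: $M$-embeddedness is inherited by closed subspaces \cite{harmand2006m}, and $Y$ embeds isometrically into $C_0(\mathcal{L},Y)$, so your claim would force $Y$ itself to be $M$-embedded, which fails e.g.\ for $Y=\ell^1$, while the theorem assumes nothing on $Y$. The transport mechanism is also wrong even granting the claim: $E^*=E_0^{***}$ is a quotient of $C_0(\mathcal{L},Y)^{***}$, not a subspace, so the $L$-projection cannot be ``restricted'', and an $M$-ideal property never descends from $C_0(\mathcal{L},Y)$ to its proper closed subspace $\iota(E_0)$. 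What is true, and suffices, is that $C_0(\mathcal{L},Y)$ is an $M$-ideal in $C_b(\mathcal{L},Y)$ for \emph{every} $Y$ (a Urysohn-function three-ball argument), whence $C_b(\mathcal{L},Y)^*=C_0(\mathcal{L},Y)^*\oplus_1 C_0(\mathcal{L},Y)^\perp$. Given $\phi\in E^*$, extend it by Hahn--Banach to $\Phi\in C_b(\mathcal{L},Y)^*$ and split $\Phi=\Phi_a+\Phi_s$ accordingly; writing $j\colon E_0\hookrightarrow C_0(\mathcal{L},Y)$ for the inclusion, $\Phi_a|_E$ is weak-$*$ continuous on $E\simeq E_0^{**}$ because $\langle\Phi_a,j^{**}\xi\rangle=\langle j^*\Phi_a,\xi\rangle$, while $\Phi_s|_E\in E_0^\perp$; thus these are the canonical components of $\phi$ and $\|\Phi_a|_E\|+\|\Phi_s|_E\|\le\|\Phi\|=\|\phi\|$, which is exactly the required $L$-decomposition of $E^*$. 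The same splitting closes the hole you yourself flag in the distance formula: for $\phi\in E_0^\perp$ with $\|\phi\|\le 1$, Assumption AP plus (sequential) dominated convergence shows that $\Phi_a$ annihilates all of $\iota(E)$, not merely $\iota(E_0)$, and $|\Phi_s(\iota x)|\le\|\Phi_s\|\,\mathrm{dist}_{C_b}(\iota x,C_0)=\|\Phi_s\|\limsup_{L\to\infty}\|Lx\|_Y$, giving the nontrivial inequality. So your architecture is salvageable, but as written the $M$-ideal step rests on a false lemma and the distance formula is left incomplete exactly where the work lies.
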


\subsection{Abstract atomic decomposition}
Let us show the following abstract result on $o$--$O$ structures.
	\begin{thm}
		Let $(E_0,E)$ be an $o$-$O$ pair such that the space of the operators $(\cL,\tau)$ is separable.
		Then there exists a constant $C\in (0,1)$ such that for any $\Phi \in E_*$ there exist two sequences $(g_n)_{n \in \N}\subset E_*$ and $(\lambda_n)_{n \in \N}\in \ell^1(\R_+)$ such that $\Norm{g_n}{E_*}= 1$, $$\Phi=\sum_{n \in \N}\lambda_n g_n$$ and $$C\sum_{n=1}^{+\infty}\lambda_n \le \Norm{\Phi}{E_*}\le \sum_{n=1}^{+\infty}\lambda_n$$.
	\end{thm}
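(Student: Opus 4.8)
The plan is to represent every element of $E_*$ as a norm-convergent $\ell^1$-combination of a countable family of normalized ``evaluation'' functionals, which will serve as the atoms, and then to read the two-sided estimate off a Hahn--Banach argument combined with a successive-approximation scheme. First I would introduce, for each $L\in\cL$ and each $\phi\in Y^*$ with $\Norm{\phi}{Y^*}=1$, the functional $\delta_{L,\phi}$ acting on $E$ by $\langle\delta_{L,\phi},x\rangle=\phi(Lx)$. Since $|\phi(Lx)|\le\Norm{Lx}{Y}\le\Norm{x}{E}$ these are bounded, their action is weak-$*$ continuous by continuity of the maps $T_x$, and hence they belong to the predual $E_*$ (recall $(E_*)^*=E$ from the preceding theorem). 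I would then invoke separability: choosing a countable dense set $\{L_j\}\subseteq\cL$, and using that $X$ is separable so that each $L_jX$ is separable, the space $Y_0:=\overline{\mathrm{span}}\bigcup_j L_jX$ is a separable subspace of $Y$, and continuity of $T_x$ forces $Lx\in Y_0$ for all $L\in\cL$ and $x\in X$. Picking a countable norming family $\{\phi_k\}\subseteq Y_0^*$ (Hahn--Banach-extended to $Y$ without increasing the norm) and normalizing produces a countable symmetric family $\mathcal A\subseteq E_*$ of norm-one functionals. The decisive property is the norming identity: for every $x\in E$, using continuity of $T_x$ and density of $\{L_j\}$, $\sup_{a\in\mathcal A}|\langle a,x\rangle|=\sup_j\Norm{L_jx}{Y}=\sup_{L\in\cL}\Norm{Lx}{Y}=\Norm{x}{E}$.

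Next I would prove that the norm-closed convex symmetric hull $K$ of $\mathcal A$ is exactly the closed unit ball $B_{E_*}$. The inclusion $K\subseteq B_{E_*}$ is clear. If some $\Phi_0\in B_{E_*}$ were outside $K$, then, $K$ being norm-closed and convex, I could strictly separate $\Phi_0$ from $K$ by a functional in $(E_*)^*=E$, that is by some $x\in E$, obtaining $\Norm{x}{E}=\sup_{a\in\mathcal A}|\langle a,x\rangle|\le\sup_{\psi\in K}\langle\psi,x\rangle<\langle\Phi_0,x\rangle\le\Norm{\Phi_0}{E_*}\Norm{x}{E}\le\Norm{x}{E}$, a contradiction. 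The point here is that $E$ is literally the dual of $E_*$, so the norming property of $\mathcal A$ upgrades at once to the norm-closed (and not merely weak-$*$-closed) convex hull; this is precisely the step that uses the o--O/predual structure.

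Finally I would extract the decomposition with its constant. Fix $\epsilon\in(0,1)$ and set $C=1-\epsilon$. For $\Phi\ne0$ the element $\Phi/\Norm{\Phi}{E_*}$ lies in $B_{E_*}=K$, hence is a norm-limit of finite symmetric convex combinations of $\mathcal A$; so there is a finite combination $s_1$ with coefficient-sum at most $1$ and $\Norm{\Phi/\Norm{\Phi}{E_*}-s_1}{E_*}\le\epsilon$. Applying the same to the rescaled remainder and iterating yields finite combinations $s_m$ of coefficient-sum at most $\epsilon^{m-1}$ with $\Norm{\Phi/\Norm{\Phi}{E_*}-\sum_{m\le M}s_m}{E_*}\le\epsilon^{M}$. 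Collecting all atoms occurring in $\sum_m s_m$ into one sequence $(g_n)\subseteq\mathcal A$ with coefficients $(\lambda_n)$, and rescaling by $\Norm{\Phi}{E_*}$, gives $\Phi=\sum_n\lambda_n g_n$ with $\sum_n\lambda_n\le\Norm{\Phi}{E_*}\sum_{m\ge0}\epsilon^{m}=\Norm{\Phi}{E_*}/(1-\epsilon)$. Since $\Norm{g_n}{E_*}=1$, the estimate $\Norm{\Phi}{E_*}\le\sum_n\lambda_n$ is immediate, and together these yield $C\sum_n\lambda_n\le\Norm{\Phi}{E_*}\le\sum_n\lambda_n$.

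I expect the main obstacle to lie in this last step rather than in the separation argument: the set of genuine $\ell^1$-combinations $\{\sum_n\lambda_n a_n:\lambda_n\ge0,\ \sum_n\lambda_n\le1\}$ need not be norm-closed, so membership of $\Phi/\Norm{\Phi}{E_*}$ in $K$ furnishes only approximations. The successive-approximation scheme repairs this at the cost of a geometric factor $1/(1-\epsilon)$, which is exactly why the constant must be taken strictly below $1$. A secondary technical point is the verification of the norming identity for the countable family $\mathcal A$, where the separability of $\cL$ and the continuity of the maps $T_x$ are what allow one to pass from the dense set $\{L_j\}$ to all of $\cL$.
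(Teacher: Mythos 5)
Your architecture is sound and genuinely different from the paper's, but as written it has one genuine gap, and it sits beneath everything else: the claim that the evaluation functionals $\delta_{L,\phi}\colon x \mapsto \phi(Lx)$ belong to the predual $E_*$ and not merely to $E^*$. The justification you give --- ``their action is weak-$*$ continuous by continuity of the maps $T_x$'' --- is a non sequitur: continuity of $T_x$ concerns the topology $\tau$ on $\cL$ for fixed $x$, and says nothing about continuity of $x \mapsto \phi(Lx)$ with respect to $\sigma(E,E_*)$. The claim is in fact true, but proving it requires \textbf{Assumption AP} (not stated in the theorem, yet implicitly needed by both you and the paper even for $E_*$ to be a well-defined, strongly unique predual) together with the explicit form of Perfekt's identification $E \simeq E_0^{**}$: one has $E_*=E_0^*$, and the pairing of $x \in E$ against $\Lambda \in E_0^*$ is computed as $\lim_n \Lambda(x_n)$ along an AP sequence $x_n \in E_0$ with $x_n \rightharpoonup x$ in $X$ and $\sup_n \Norm{x_n}{E}\le \Norm{x}{E}$. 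Since each $L \in \cL$ is a bounded operator from $X$ to $Y$, the functional $\phi \circ L$ lies in $X^*$, hence $\lim_n \phi(Lx_n)=\phi(Lx)$; this is what identifies $\delta_{L,\phi}$ with the canonical image of $\delta_{L,\phi}|_{E_0} \in E_0^*=E_*$, and it is simultaneously what makes your norming identity a statement about the $E_*\times E$ pairing rather than about $E^*$. Without this step your atoms live only in $E^*$, and the separation argument --- which depends crucially on the separating functional being an element of $E=(E_*)^*$ --- collapses. (You predicted the main obstacle would be the non-closedness of the set of $\ell^1$-combinations; that part you handle correctly, while the real obstacle is this one.)

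Once that step is repaired, the rest of your proof is correct, and it is a genuinely different and in some respects stronger argument than the paper's. The paper does not construct the atoms by hand: it quotes \cite[Theorem 3]{d2019atomic} to write $\Phi=\sum_n L_n^* y_n^*$ with $(y_n^*) \in \ell^1(Y^*)$ over a fixed countable family $\{L_n\}\subset \cL$, normalizes $L_n^*y_n^*$ to obtain the $g_n$, gets the upper bound from the triangle inequality, and gets the lower bound from the isometric statement of the quoted theorem combined with the uniform bound $\Norm{L_n^*}{\cL(Y^*,E_*)}\le K_1$ via Banach--Steinhaus; its constant $C=K_2/K_1$ is whatever the citation yields. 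Your route --- countable dense operators plus a countable norming family for the separable subspace $Y_0$, the bipolar/separation argument showing the closed convex symmetric hull of the atoms is all of $B_{E_*}$, and the geometric-series iteration --- is self-contained, produces atoms of the same form (normalized $L_j^*\phi_k$), so the application in Section 6 (measures with support of cardinality at most $3$) would go through unchanged, and it yields the two-sided estimate with any constant $C \in (0,1)$, which is sharper than the paper's unspecified constant.
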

	\begin{proof}
		Since we have that the map $T_x: L \in \cL \mapsto Lx \in Y$ is continuous, we can also observe that
		\begin{equation*}
		E=\left\{x \in X: \ \sup_{L \in D}\Norm{Lx}{Y}<+\infty \right\}.
		\end{equation*}
		From \cite[Theorem $3$]{d2019atomic} we know that for $\Phi \in E_*$ there exists a sequence $(y^*_n)_{n \in \N}\in \ell^1(Y^*)$ such that
		\begin{equation*}
		\Phi=\sum_{n=1}^{+\infty}L^*_ny^*_n
		\end{equation*}
		for some sequence $\{L_n\}_{n \ge 0} \subset \cL$ independent from $\Phi$, where $L^*_n \in \cL(Y^*,E_*)$ is the adjoint operator of $L_n$. Now let us recall that $\Norm{L_n}{\cL(E,Y)}=\Norm{L_n^*}{\cL(Y^*,E_*)}$. By definition of $E$, we can use the Banach-Steinhaus theorem to assure that there exists a constant $K_1$ such that
		\begin{equation*}
		\Norm{L_n^*}{\cL(Y^*,E_*)}=\Norm{L_n}{\cL(E,Y)} \le K_1.
		\end{equation*}
		Let us then define
		\begin{equation*}
		g_n=\frac{L_n^*y_n^*}{\Norm{L_n^*y_n^*}{E_*}}
		\end{equation*}
		($g_n=0$ if $L_n^*y_n^*=0$) and $\lambda_n=\Norm{L_n^*y_n^*}{E_*}$ to obtain
		\begin{equation*}
		\Phi=\sum_{n =1}^{+\infty}\lambda_n g_n.
		\end{equation*}
		Now let us observe that, since $\Norm{g_n}{E_*}=1$,
		\begin{equation*}
		\Norm{\Phi}{E_*}\le \sum_{n =1}^{+\infty}\lambda_n.
		\end{equation*}
		Let us also observe that
		\begin{equation*}
		\Norm{L_n^*y_n^*}{E_*}\le \Norm{L_n^*}{\cL(Y^*,E_*)}\Norm{y_n^*}{Y^*}\le K_1 \Norm{y_n^*}{Y^*}
		\end{equation*}
		so that
		\begin{equation*}
		\Norm{y_n^*}{Y^*}\ge \frac{1}{K_1}\lambda_n.
		\end{equation*}
		Since the predual is strongly unique, by using the isometry in \cite[Theorem $3$]{d2019atomic} we have that there exists a constant $K_2$ (independent from $\Phi$) such that
		\begin{equation*}
		\Norm{\Phi}{E_*}\ge K_2 \sum_{n=1}^{+\infty}\Norm{y^*_n}{Y^*} \ge \frac{K_2}{K_1}\sum_{n=1}^{+\infty} \lambda_n.
		\end{equation*}
		Pose $C=\frac{K_2}{K_1}$ to conclude the proof.
	\end{proof}
\section{Besov and fractional Haj\l{}asz-Sobolev spaces on compact metric measure spaces}
\subsection{Doubling metric spaces and doubling measures}
In this subsection we recall the notions of doubling measure space and doubling measure, see for example \cite{ambrosiotilli} and the references therein.
\begin{defn}
	We say that a metric space $(K,\rho)$ has the \emph{doubling condition} if there exists a positive integer $C$ such that any ball $B$ can be covered by at most $C$ balls having half the radius.\\
	A Borel measure $\mu$ on a metric space $(K,\rho)$ is said to have the \emph{doubling condition} if
	\begin{itemize}
	\item[(i)] there exist two balls $B_1, B_2$ such that $\mu(B_1)>0$ and $\mu(B_2)<+\infty$;
	\item[(ii)] there exists a constant $C>0$ such that
	\begin{equation}\label{doubcon}
	\mu(B_{2r}(x))\le C\mu(B_r(x))
	\end{equation}
	for all $x\in K$ and all $r>0$. The space $(K, \rho, \mu)$ is said to be a doubling metric measure space. A measure $\mu$ that satisfies (i) is said to be non-degenerate. Condition (ii) is called doubling condition and the constant $C$ is called doubling constant.
	\end{itemize}
\end{defn}
First of all, let us observe that the choice of the constant $2$ is arbitrary. Indeed, if we fix a constant $c>1$, one can show that a non-degenerate measure $\mu$ is a doubling measure on $(K, \rho)$ if and only if there exists a constant $C_c>0$ such that
\begin{equation}\label{doubgen}
\mu(B_{cr}(x))\le C_c\mu(B_r(x))
\end{equation}
for all $x \in K$ and all $r>0$.\\
This property implies that any doubling measure $\mu$ is fully supported. Indeed, let us consider a generic $x \in K$ and $r>0$. Let us suppose by contradiction that $\mu(B_r(x))=0$. Then $B_1$ is not contained in $B_r(x)$. However, there exists a $c>1$ such that $B_1 \subseteq B_{cr}(x)$, but by doubling condition \eqref{doubgen} we have $\mu(B_{cr}(x))=0$, concluding the proof.\\
Moreover, if $(K, \rho, \mu)$ is a compact doubling metric measure space, then $\mu(K)<+\infty$. Indeed, if we consider $x \in K$ and $r>0$ such that $B_2=B_r(x)$, since $K$ is compact, there exists a constant $c\ge 1$ such that $B_{cr}(x)=K$, concluding that $\mu(B_{cr}(x))<+\infty$ by doubling condition \eqref{doubgen}.\\
It is easy to see that if a measure is doubling then the underlying metric space must be doubling \cite{coifman2006analyse}, while the converse is not true in general. However in \cite{luukkainen1998every} it is shown that every complete (and in particular compact) doubling metric space can be given a doubling measure.\\
We will also work with compact metric spaces of the form $(K, \rho^\alpha)$ for some metric $\rho$. First of all, let us show the following easy Lemma.
\begin{lem}
	Fix $\alpha \in (0,1)$. If $(K,\rho)$ is a compact doubling metric space then $(K, \rho^\alpha)$ is a doubling metric space. Moreover, any doubling measure $\mu$ on $(K, \rho)$ is also doubling on $(K, \rho^\alpha)$.
\end{lem}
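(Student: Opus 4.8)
The plan is to exploit the elementary but crucial observation that the metrics $\rho$ and $\rho^\alpha$ generate exactly the same balls, only with their radii relabelled. Precisely, for any $x \in K$ and any $s>0$ one has
$$
B_s^{\rho^\alpha}(x)=\{y\in K:\rho^\alpha(x,y)<s\}=\{y\in K:\rho(x,y)<s^{1/\alpha}\}=B_{s^{1/\alpha}}^{\rho}(x),
$$
so that the collection of $\rho^\alpha$-balls coincides with the collection of $\rho$-balls under the strictly increasing bijection $r\mapsto r^\alpha$ of the radii. Both assertions of the Lemma will then follow by translating the corresponding facts, already known for $(K,\rho)$, through this identity.

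First I would prove that $(K,\rho^\alpha)$ is doubling. Covering a $\rho^\alpha$-ball $B_s^{\rho^\alpha}(x)$ by $\rho^\alpha$-balls of half radius $s/2$ amounts, via the identity above, to covering the $\rho$-ball $B_{s^{1/\alpha}}^{\rho}(x)$ by $\rho$-balls of radius $(s/2)^{1/\alpha}=s^{1/\alpha}2^{-1/\alpha}$. Writing $r=s^{1/\alpha}$, the target radius is $r\,2^{-1/\alpha}$. Here lies the one point that needs care: since $\alpha\in(0,1)$ we have $2^{-1/\alpha}<1/2$, so a single application of the doubling condition of $(K,\rho)$ (which only produces balls of half the radius) does not suffice. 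Instead I would iterate the doubling property of $(K,\rho)$: if $N$ is its doubling constant, then any $\rho$-ball can be covered by $N^k$ $\rho$-balls of radius $r\,2^{-k}$, and choosing $k=\lceil 1/\alpha\rceil$ makes $r\,2^{-k}\le r\,2^{-1/\alpha}$, so that the $N^k$ small balls are contained in the concentric balls of the required radius $r\,2^{-1/\alpha}$ and these still cover $B_r^{\rho}(x)$. Pulling back through $r\mapsto r^\alpha$, this yields a covering of $B_s^{\rho^\alpha}(x)$ by $N^{\lceil 1/\alpha\rceil}$ $\rho^\alpha$-balls of radius $s/2$, which is the doubling condition for $(K,\rho^\alpha)$.

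For the measure statement the argument is even shorter, and reduces directly to the generalized doubling inequality \eqref{doubgen} established above. Using the ball identity once more, the desired inequality $\mu(B_{2s}^{\rho^\alpha}(x))\le C'\mu(B_s^{\rho^\alpha}(x))$ becomes, with $r=s^{1/\alpha}$,
$$
\mu\bigl(B_{2^{1/\alpha}r}^{\rho}(x)\bigr)\le C'\,\mu\bigl(B_r^{\rho}(x)\bigr),
$$
which is exactly \eqref{doubgen} for the fixed constant $c=2^{1/\alpha}>1$; hence it holds for all $x$ and $r$ with $C'=C_{2^{1/\alpha}}$. Finally, non-degeneracy transfers for free: the two balls $B_1,B_2$ witnessing condition (i) for $\rho$ are, as sets, also $\rho^\alpha$-balls, so $\mu(B_1)>0$ and $\mu(B_2)<+\infty$ persist. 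The only genuine obstacle in the whole argument is the mismatch $2^{-1/\alpha}<1/2$ in the metric part, handled by the finite iteration described above; everything else is a direct transcription through the radius relabelling.
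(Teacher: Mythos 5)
Your proof is correct, but for the metric-space half of the statement you take a genuinely different route from the paper. For the measure half the two arguments essentially coincide: both rest on the observation that $\rho^\alpha$-balls are $\rho$-balls with relabelled radii, combined with the equivalence between the doubling condition and its generalized form \eqref{doubgen}; you apply that equivalence on the $\rho$ side (using $c=2^{1/\alpha}$ to produce the standard doubling inequality for $\rho^\alpha$), while the paper applies it on the $\rho^\alpha$ side (reading the standard doubling inequality for $\rho$ as \eqref{doubgen} with $c=2^\alpha$ for $\rho^\alpha$) --- a cosmetic difference. For the metric half, however, the paper argues indirectly and in the opposite order: it first invokes the cited result of Luukkainen that every compact doubling metric space carries a doubling measure, shows that this measure is doubling for $\rho^\alpha$, and then concludes that $(K,\rho^\alpha)$ is a doubling metric space from the cited fact that any space carrying a doubling measure is doubling. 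You instead give a direct, self-contained covering argument, iterating the covering property $\lceil 1/\alpha\rceil$ times to compensate for the mismatch $2^{-1/\alpha}<1/2$, and you are right that this iteration is the one point needing care. Your route is more elementary (no existence theorem for doubling measures, no appeal to the fact that doubling measures live only on doubling spaces), yields an explicit doubling constant $N^{\lceil 1/\alpha\rceil}$, and proves the metric assertion for arbitrary doubling metric spaces without using compactness; the paper's route is shorter given the machinery it has already recalled, and derives the metric statement as a free by-product of the measure statement. A further small merit of your write-up is that you explicitly check that non-degeneracy (condition (i)) transfers to $(K,\rho^\alpha)$, a point the paper leaves implicit.
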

\begin{proof}
	First of all, since $(K, \rho)$ is a doubling metric space, then there exists a doubling measure $\mu$ on $(K, \rho)$. Using the doubling condition in the form \eqref{doubgen} for $c=2^\alpha$ and setting $C_{2^\alpha}=C_\alpha$ we have
	\begin{equation*}
	\mu(B_{(2r)^\alpha}(x))\le C_{\alpha} \mu(B_{r^\alpha}(x))
	\end{equation*}
	hence $\mu$ is a doubling measure on $(K, \rho^\alpha)$. Finally, since we have a doubling measure on $(K,\rho^\alpha)$, $(K, \rho^\alpha)$ is a doubling metric space.
\end{proof}
Moreover, let us observe, as a consequence of \cite[Lemma $4.7$]{hajlasz2003sobolev}, the following Lemma.
\begin{lem}\label{lemdoub}
	Let $(K, \rho)$ be a compact doubling metric space and $\mu$ a doubling measure on it. Then there exist two constants $C,Q>0$ such that
	\begin{equation*}
	\mu(B_r(x))\ge C r^Q.
	\end{equation*}
\end{lem}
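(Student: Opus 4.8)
The plan is to iterate the doubling condition \emph{downward} starting from the whole space. First I would fix $R>\diam(K)$, which is finite since $K$ is compact, so that $B_R(x)=K$ for every $x\in K$. I then record two facts already established in the excerpt for a compact doubling metric measure space: $\mu(K)<+\infty$, and $\mu(K)>0$ because every doubling measure is fully supported. Writing $M:=\mu(K)\in(0,+\infty)$, this gives $\mu(B_R(x))=M$ for all $x\in K$, which serves as the base of the iteration.

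Next I would rewrite the generalized doubling condition \eqref{doubgen} (with $c=2$) in the equivalent form $\mu(B_s(x))\ge C_0^{-1}\mu(B_{2s}(x))$, where $C_0\ge 1$ denotes the doubling constant. Applying this $k$ times with $s=2^{-k}R$ yields $\mu(B_{2^{-k}R}(x))\ge C_0^{-k}\mu(B_R(x))=C_0^{-k}M$. Given $0<r\le R$, I would choose the integer $k\ge 1$ with $2^{-k}R\le r<2^{-(k-1)}R$ and invoke the obvious monotonicity $r\mapsto\mu(B_r(x))$ to conclude $\mu(B_r(x))\ge\mu(B_{2^{-k}R}(x))\ge C_0^{-k}M$.

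The final step converts the geometric factor $C_0^{-k}$ into a power of $r$. From the choice of $k$ one has $2^{k-1}\le R/r$, hence $C_0^{k-1}=(2^{k-1})^{\log_2 C_0}\le (R/r)^{\log_2 C_0}$ since $C_0\ge 1$, and therefore $C_0^{-k}=C_0^{-1}C_0^{-(k-1)}\ge C_0^{-1}(r/R)^{\log_2 C_0}$. Setting $Q:=\log_2 C_0$ and absorbing $M$, $C_0$ and $R$ into a single constant $C:=M\,C_0^{-1}R^{-Q}$ gives $\mu(B_r(x))\ge C\,r^{Q}$ for every $x\in K$ and every $0<r\le R$, which is the asserted lower mass bound in the meaningful range of radii.

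There is no deep analytic obstacle here; the proof is entirely elementary given the results proved above. The two points requiring care are, first, the strict positivity $\mu(K)>0$, which is exactly what the full-support property established earlier guarantees and without which the base of the iteration collapses; and second, the slightly error-prone bookkeeping that turns the exponential decay $C_0^{-k}$ into the polynomial rate $r^{\log_2 C_0}$. I would also explicitly flag that the estimate is intended for $r\le\diam(K)$, since for $r$ exceeding the diameter the left-hand side is the fixed constant $M$ while $r^{Q}$ grows without bound; this restriction is harmless for the applications in the sequel.
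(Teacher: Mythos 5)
Your proof is correct, but it takes a different route from the paper in the trivial sense that the paper offers no proof at all: Lemma \ref{lemdoub} is stated there as a direct consequence of \cite[Lemma 4.7]{hajlasz2003sobolev}, so your downward iteration of the doubling inequality is essentially the elementary argument hidden behind that citation, written out in full. Your bookkeeping checks out: the base of the iteration $\mu(B_R(x))=\mu(K)\in(0,+\infty)$ uses exactly the two facts (finiteness and full support) established in the paper just before the lemma; the integer $k=\lceil\log_2(R/r)\rceil\ge 1$ exists for every $0<r<R$, and since $R>\diam(K)$ this covers the whole meaningful range $0<r\le\diam(K)$; and the conversion of $C_0^{-k}$ into $C_0^{-1}(r/R)^{\log_2 C_0}$ is correct because $C_0\ge 1$. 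Two small points are worth flagging. First, the lemma asserts $Q>0$, and $Q=\log_2 C_0$ could a priori vanish; but if $K$ has at least two points then $C_0>1$ necessarily (a doubling constant equal to $1$ would give $\mu(B_s(x))=\mu(K)$ for all $s>0$ and all $x$, and two disjoint balls centered at distinct points would force $\mu(K)\ge 2\mu(K)$, a contradiction), or one may simply replace $C_0$ by $\max\{C_0,2\}$. Second, your observation that the bound cannot hold for all $r>0$ is a genuine, if harmless, correction: the paper's subsequent Morrey-type theorem hypothesizes the estimate ``for all $x\in K$, $r>0$,'' which is literally impossible since the left-hand side is bounded by $\mu(K)$. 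What your self-contained argument buys, compared with the paper's citation, is precisely this explicit control of the constants and of the admissible range of radii.
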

\subsection{Besov spaces on metric measure spaces}
In the following we will need the notion of Besov spaces on metric measure spaces. From now on, in this section, let us fix a doubling compact metric space $(K,\rho)$ and a doubling measure $\mu$ on $K$.
\begin{defn}[\cite{gogatishvili2010interpolation}]
	The \textbf{Besov} space of parameters $s\in (0,1)$ and $p,q \in [1,\infty)$ on $(K,\rho,\mu)$ is the space
	\begin{multline*}
	\cB^s_{p,q}(K,\rho,\mu)=\Big\{f:K\to \R: f \in L^p(K,\mu) \text{ and } \\ [f]_{\cB^s_{p,q}}:=\left[\int\limits_0^{+\infty} \frac{dr}{r} \left[\int_{K} \fint_{B_r(x)} \frac{|f(x)-f(y)|^{p}}{r^{sp}}\,d\mu(y)\,d\mu(x)\right]^{q/p}\right]^{1/q}<+\infty \Big\}.\end{multline*}
	It is a Banach space when endowed with the norm
	$$\|f\|_{\cB^s_{p,q}}=\|f\|_{L^p}+[f]_{\cB^s_{p,q}}.$$
\end{defn}
The space $\cB_{p,q}^s$ is obviously separable as it embeds continuously as a subspace of $L^p(K,\rho)$ which is separable because $L^p$ spaces on separable metric measure spaces are separable.
\\
With the additional assumption that $p=q>2$, a \textbf{Clarkson type inequality} (see \cite{Grigoryan}) can be proved, in the sense that follows: as it is more convenient for technical reasons, introduce the equivalent norm $\|f\|'_{\cB^s_{p,q}}=(\|f\|_{L^p}^p+([f]^{s}_{p,q})^p)^{1/p}$ and then, in a way similar to how it is done in $L^p$ spaces, prove:
$$\left\|\frac{f+g}{2}\right\|^p_{\Lambda^s_{p,p}}+\left\|\frac{f-g}{2}\right\|^p_{\cB^s_{p,p}}\le \frac{1}{2}\left[ \left\|f\right\|^p_{\cB^s_{p,p}}+\left\|g\right\|^p_{\cB^s_{p,p}}\right]$$
This allows to prove that $\cB^s_{p,p}$ is \textbf{uniformly convex}, that is for every $\varepsilon>0$ there exists $\delta>0$ such that:
$$\|f\|_{\cB^s_{p,p}},\|g\|_{\cB^s_{p,p}}=1 \text{ and } \left\|\frac{f+g}{2}\right\|_{\cB^s_{p,p}}>1-\delta \Rightarrow \|f-g\|_{\cB^s_{p,p}}<\varepsilon.$$
Recalling that a theorem by Milman and Pettis shows that every uniformly convex Banach space is reflexive (see for instance \cite{brezis2010functional}), we know that $\cB^s_{p,q}$ is a reflexive and separable Banach space.\\
In \cite{gogatishvili2010interpolation} it has been shown that the seminorm $[f]_{\cB_{p,p}^s}$ is equivalent to the semi-norm
\begin{equation*}
[f]_{\cB_p^s}=\int_{K}\int_{K}\frac{|f(x)-f(y)|^p}{\rho(x,y)^{\alpha p}\mu(B_{\rho(x,y)}(x)}d\mu(x)d\mu(y).
\end{equation*}
In particular, if $K=\R^n$, $\rho$ is the Euclidean distance and $\mu$ is the Lebesgue measure, then $[\cdot]_{\cB_p^s}$ is the semi-norm characterizing the fractional Sobolev space $W^{s,p}(\R^n)$, as used in \cite{Perfekt}.
\subsection{Fractional Haj\l{}asz-Sobolev spaces on metric measure spaces}
Let us give a definition of another interesting functional space on metric measure spaces.
\begin{defn}[\cite{karak2019measure}]
	Let $f \in L^p(K,\mu)$. Then we say that $g$ is a Haj\l{}asz $s$-gradient if
	\begin{equation*}
	|f(x)-f(y)|\le \rho^s(x,y)|g(x)+g(y)|.
	\end{equation*}
	We denote with $\cD_s(f)$ the set of all Haj\l{}asz gradents of $f$ and with $\cD_s^p(f)$ the set of the Haj\l{}asz gradients of $f$ that are in $L^p(K,\mu)$. We say that $f$ belongs to the fractional Haj\l{}asz-Sobolev space $\cH^{s,p}(K,\rho,\mu)$ if $\cD_s^p(f) \not = \emptyset$. In particular $\cH^{s,p}(K,\rho,\mu)$ is a Banach space when endowed with the norm
	\begin{equation*}
	\Norm{f}{\cH^{s,p}}:=\Norm{f}{L^p}+[f]_{\cH^{s,p}}
	\end{equation*}
	where
	\begin{equation*}
	[f]_{\cH^{s,p}}:=\inf_{g \in \cD_s^p(f)}\Norm{g}{L^p}
	\end{equation*}
\end{defn}
Let us first observe that the fractional Haj\l{}asz-Sobolev space $\cH^{s,p}(K,\rho,\mu)$ coincides with the Haj\l{}asz-Sobolev space $\cH^{1,p}(K,\rho^s,\mu)$. Hence, in particular, a Morrey-type embedding theorem can be shown, as a direct consequence of \cite[Theorem $8.7$]{hajlasz2003sobolev}.
\begin{thm}
	Let $(K,\rho,\mu)$ be a compact metric measure space such that $\mu(B_r(x))\ge Cr^Q$ for some constants $C>0, Q\ge0$ and for all $x\in K, r>0$, and suppose $p>\frac{Q}{s}$. Then there exists a constant $C$ such that for any function $u \in \cH^{s,p}(K,\rho,\mu)$ and any $g \in \cD_s^p(u)$ it holds
	\begin{equation*}
	|u(x)-u(y)| \le C d(x,y)^{s-\frac{Q}{p}}\Norm{g}{L^p}, \ \forall x,y \in K.
	\end{equation*}
\end{thm}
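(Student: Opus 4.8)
The plan is to reduce the statement to the classical Morrey--Sobolev embedding for first-order Haj\l{}asz--Sobolev spaces through the change of metric $\rho\mapsto\rho^s$, and then to invoke \cite[Theorem $8.7$]{hajlasz2003sobolev} directly. First I would record the identification already observed above: a function $g$ satisfies the fractional gradient inequality $|u(x)-u(y)|\le \rho^s(x,y)|g(x)+g(y)|$ with respect to $\rho$ if and only if it satisfies the first-order gradient inequality with respect to the snowflaked metric $\tilde\rho:=\rho^s$. Consequently $\cD_s^p(u)$ computed in $(K,\rho,\mu)$ coincides with $\cD_1^p(u)$ computed in $(K,\tilde\rho,\mu)$, and $\cH^{s,p}(K,\rho,\mu)=\cH^{1,p}(K,\tilde\rho,\mu)$ with identical seminorms. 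This step converts ``fractional smoothness'' into ordinary first-order smoothness on $(K,\tilde\rho,\mu)$.

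Second I would transport the lower mass bound to the metric $\tilde\rho$. Since the $\tilde\rho$-ball of radius $r$ centred at $x$ is exactly the $\rho$-ball of radius $r^{1/s}$, i.e. $\{y:\tilde\rho(x,y)<r\}=B_{r^{1/s}}(x)$, the hypothesis $\mu(B_r(x))\ge Cr^Q$ becomes
$$\mu(\{y:\tilde\rho(x,y)<r\})=\mu(B_{r^{1/s}}(x))\ge C\,(r^{1/s})^{Q}=C\,r^{Q/s}.$$
Hence $(K,\tilde\rho,\mu)$ satisfies the lower mass bound with homogeneous dimension $\tilde Q:=Q/s$, and the standing assumption $p>\frac{Q}{s}$ is precisely the inequality $p>\tilde Q$, i.e. the range in which the Morrey embedding is available.

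Third I would apply \cite[Theorem $8.7$]{hajlasz2003sobolev} to $\cH^{1,p}(K,\tilde\rho,\mu)$: for $p>\tilde Q$ and any $u$ in this space with $g\in\cD_1^p(u)$ one obtains
$$|u(x)-u(y)|\le C\,\tilde\rho(x,y)^{\,1-\tilde Q/p}\Norm{g}{L^p},\qquad \forall x,y\in K.$$
It then only remains to rewrite the exponent using $\tilde\rho(x,y)=\rho^s(x,y)$ and $\tilde Q/p=\frac{Q}{sp}$, namely
$$\tilde\rho(x,y)^{\,1-\tilde Q/p}=\rho(x,y)^{\,s\left(1-\frac{Q}{sp}\right)}=\rho(x,y)^{\,s-\frac{Q}{p}},$$
which (reading $d=\rho$) is exactly the claimed estimate.

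The only point genuinely requiring care is the second step: one must verify that the sole hypothesis actually needed in \cite[Theorem $8.7$]{hajlasz2003sobolev} is the lower mass bound, and that this bound transfers correctly under $\rho\mapsto\rho^s$, so that the homogeneous dimension rescales from $Q$ to $Q/s$ and makes $p>\frac{Q}{s}$ the correct threshold. Everything else is the bookkeeping of the exponent computation above; in particular no Poincar\'e inequality or separate doubling hypothesis on $\mu$ is invoked beyond what is already encoded in the lower mass bound, which is precisely the advantage of using the Haj\l{}asz (rather than an upper-gradient) formulation of the Sobolev condition.
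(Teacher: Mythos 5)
Your proposal is correct and takes essentially the same route as the paper, which likewise reduces the statement to the identification $\cH^{s,p}(K,\rho,\mu)=\cH^{1,p}(K,\rho^s,\mu)$ and then invokes \cite[Theorem 8.7]{hajlasz2003sobolev}, leaving the details implicit. Your added bookkeeping --- the transfer of the lower mass bound to the snowflaked metric giving homogeneous dimension $\tilde Q=Q/s$, the observation that $p>\frac{Q}{s}$ is exactly the Morrey range $p>\tilde Q$, and the exponent computation $\rho(x,y)^{s(1-\frac{Q}{sp})}=\rho(x,y)^{s-\frac{Q}{p}}$ --- is accurate and completes the sketch the paper gives.
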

Other embedding theorems can be shown, also for more general spaces (see, for instance, \cite{karak2019measure}).\\
From this punctual estimate we deduce that if $u \in \cH^{s,p}(K,\rho,\mu)$ for $p>\frac{Q}{s}$ then $u$ is $\left(s-\frac{Q}{p}\right)$-H\"older continuous. In Lemma \ref{lemdoub} we have shown that doubling measures satisfy the previous condition for some $Q\ge 0$. From now on let $Q$ be such a constant. We have
\begin{prop}
	Let $(K,\rho,\mu)$ be a doubling compact metric measure space and $p>\frac{Q}{s}$. Then $\cH^{s,p}(K,\rho,\mu)$ embeds with continuity in $L^\infty$.
\end{prop}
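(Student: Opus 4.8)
The plan is to bootstrap from the Morrey-type punctual estimate stated just above, which controls the \emph{oscillation} of $u$, and then to pin down the additive constant using the $L^p$ norm together with the finiteness of $\mu(K)$. Throughout, recall from the discussion preceding Lemma \ref{lemdoub} that a compact doubling metric measure space satisfies $0<\mu(K)<+\infty$, the lower bound coming from non-degeneracy (full support) and the upper bound from compactness.

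First I would take $u \in \cH^{s,p}(K,\rho,\mu)$ and apply the Morrey embedding theorem; taking the infimum over all $g \in \cD_s^p(u)$ yields
$$|u(x)-u(y)| \le C\,\rho(x,y)^{s-\frac{Q}{p}}\,[u]_{\cH^{s,p}}, \qquad \forall x,y \in K.$$
Since $p>Q/s$ the exponent $s-Q/p$ is strictly positive, and since $K$ is compact its diameter $\diam K$ is finite; hence $\rho(x,y)^{s-Q/p}\le (\diam K)^{s-Q/p}$ and the oscillation of $u$ is uniformly controlled,
$$\sup_{x,y \in K}|u(x)-u(y)| \le C_1\,[u]_{\cH^{s,p}}, \qquad C_1:=C\,(\diam K)^{s-\frac{Q}{p}}.$$

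Next I would control the mean value. Setting $\bar u=\frac{1}{\mu(K)}\int_K u\,d\mu$, Hölder's inequality gives
$$|\bar u| \le \frac{1}{\mu(K)}\int_K |u|\,d\mu \le \mu(K)^{-\frac{1}{p}}\Norm{u}{L^p},$$
which is where the strict positivity and finiteness of $\mu(K)$ are used. Writing $u(x)-\bar u=\frac{1}{\mu(K)}\int_K (u(x)-u(y))\,d\mu(y)$ and invoking the oscillation bound yields
$$|u(x)-\bar u| \le \frac{1}{\mu(K)}\int_K |u(x)-u(y)|\,d\mu(y) \le C_1\,[u]_{\cH^{s,p}}.$$
Combining the two estimates via $|u(x)|\le |u(x)-\bar u|+|\bar u|$ and taking the supremum over $x\in K$ gives $\Norm{u}{L^\infty}\le C_1\,[u]_{\cH^{s,p}}+\mu(K)^{-1/p}\Norm{u}{L^p}\le C_2\,\Norm{u}{\cH^{s,p}}$ with $C_2=\max\{C_1,\mu(K)^{-1/p}\}$, which is the claimed continuous embedding.

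I do not expect a genuine obstacle here, since everything reduces to the Morrey estimate already in hand. The only point requiring care is conceptual rather than technical: the Morrey estimate alone controls $u$ merely up to an additive constant, so the $L^p$ bound is indispensable for fixing that constant, and this is precisely the step at which compactness (finiteness of $\mu(K)$) and non-degeneracy (positivity of $\mu(K)$) enter the argument.
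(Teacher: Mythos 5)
Your proof is correct and follows essentially the same strategy as the paper's: both start from the Morrey-type punctual estimate and then integrate in $y$ over $K$ to bring in the $L^p$ norm, using finiteness (and positivity) of $\mu(K)$ to fix the additive constant. The only difference is bookkeeping --- you pass through the mean value $\bar u$ and H\"older's inequality, while the paper raises the pointwise bound $|u(x)|\le |u(y)|+CD^{s-Q/p}\Norm{g}{L^p}$ to the $p$-th power and integrates directly, taking the infimum over $g\in\cD_s^p(u)$ at the end rather than at the start; both variants are equally valid.
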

\begin{proof}
	Let us denote with $D=\diam(K)$. Fix $u \in \cH^{s,p}(K,\rho,\mu)$, $g \in \cD_s^p(u)$ and $x,y \in K$. Observe that, by using the previous punctual estimate,
	\begin{align*}
	|u(x)|&\le |u(y)|+|u(x)-u(y)|\\
	&\le |u(y)|+Cd(x,y)^{s-\frac{Q}{p}}\Norm{g}{L^p}\\
	&\le |u(y)|+CD^{s-\frac{Q}{p}}\Norm{g}{L^p}.
	\end{align*}
	By using the $p$-homogeneity and convexity of the function $t \mapsto t^p$ for $t>0$ we have
	\begin{equation*}
	|u(x)|^p \le C_1(|u(y)|^p+C^pD^{ps-Q}\Norm{g}{L^p}^p)
	\end{equation*}
	and then integrating in $d\mu(y)$, setting $M=\mu(K)$, we have
	\begin{equation*}
	|u(x)|^p \le \frac{C_1}{M}(\Norm{u}{L^p}^p+MC^pD^{ps-Q}\Norm{g}{L^p}^p).
	\end{equation*}
	Now, by using the fact that there exists a constant $C_p$ such that $(a^p+b^p)^{\frac{1}{p}}\le C_p(a+b)$ for any $a,b>0$, we have
	\begin{equation*}
	|u(x)| \le C_2(\Norm{u}{L^p}+\Norm{g}{L^p}).
	\end{equation*}
	Now let us take the infimum over $\cD_s^p(u)$ to achieve
	\begin{equation*}
	|u(x)|\le C_2 \Norm{u}{\cH^{s,p}},
	\end{equation*}
	and then, taking the maximum on $K$, we have
	\begin{equation*}
	\Norm{u}{L^\infty}\le C_2 \Norm{u}{\cH^{s,p}}.
	\end{equation*}
\end{proof}
Concerning the relation between $\cB_{p,p}^s$ and $\cH^{s,p}$, one can show the following embedding theorem as a direct consequence of \cite[Lemma $6.1$]{gogatishvili2010interpolation}.
\begin{thm}
	Let $(K,\rho,\mu)$ be a doubling compact metric measure space. Then $\cB_{p,p}^s(K,\rho,\mu)$ embeds with continuity in $\cH^{s,p}(K,\rho,\mu)$.
\end{thm}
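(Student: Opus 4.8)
The plan is to realize the embedding by exhibiting, for each $f\in\cB^s_{p,p}(K,\rho,\mu)$, a single admissible Haj\l{}asz $s$-gradient whose $L^p$ norm is controlled by the Besov norm. Since by definition $\cB^s_{p,p}\subseteq L^p(K,\mu)$ with $\Norm{f}{L^p}\le\Norm{f}{\cB^s_{p,p}}$, once we produce $g\in\cD^p_s(f)$ with $\Norm{g}{L^p}\le C\Norm{f}{\cB^s_{p,p}}$ we obtain $[f]_{\cH^{s,p}}=\inf_{h\in\cD^p_s(f)}\Norm{h}{L^p}\le\Norm{g}{L^p}$, and hence
$$\Norm{f}{\cH^{s,p}}=\Norm{f}{L^p}+[f]_{\cH^{s,p}}\le C'\Norm{f}{\cB^s_{p,p}},$$
which is exactly the continuity of the embedding.

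For the gradient I would take a constant multiple of the fractional sharp maximal function
$$g(x):=\sup_{r>0}\,r^{-s}\fint_{B_r(x)}|f-f_{B_r(x)}|\,d\mu,\qquad f_{B_r(x)}:=\fint_{B_r(x)}f\,d\mu.$$
The pointwise gradient inequality $|f(x)-f(y)|\le C\rho(x,y)^s\,(g(x)+g(y))$ at Lebesgue points is obtained by the telescoping argument of \cite[Lemma $6.1$]{gogatishvili2010interpolation}: setting $d=\rho(x,y)$ and $B_i=B(x,2^{-i}d)$, one writes $|f(x)-f_{B_0}|\le\sum_{i\ge0}|f_{B_{i+1}}-f_{B_i}|$, bounds each increment by $C\fint_{B_i}|f-f_{B_i}|\,d\mu\le C(2^{-i}d)^s g(x)$ using the doubling property of $\mu$ to compare $\mu(B_i)$ and $\mu(B_{i+1})$, and sums the geometric series in $2^{-is}$; the symmetric estimate at $y$ together with the comparison of the two central balls $B(x,d)$ and $B(y,d)$, which are mutually comparable since $\rho(x,y)=d$, gives the claim. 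Thus $Cg\in\cD_s(f)$, and once the $L^p$ bound below is established, $Cg\in\cD^p_s(f)$.

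It remains to bound $\Norm{g}{L^p}$ by the Besov seminorm, and here the exponent $q=p$ enters in an essential way. Discretizing the supremum over dyadic scales $r=2^{-k}$ and applying Jensen's inequality, one gets
$$g(x)^p\le C\sup_{k}\,2^{ksp}\,a_k(x),\qquad a_k(x):=\fint_{B_{2^{-k}}(x)}\fint_{B_{2^{-k}}(x)}|f(z)-f(w)|^p\,d\mu(z)\,d\mu(w),$$
and since a supremum of nonnegative terms is dominated by their sum,
$$\int_K g(x)^p\,d\mu(x)\le C\sum_{k}2^{ksp}\int_K a_k(x)\,d\mu(x);$$
this elementary domination is precisely the discrete $\ell^p\hookrightarrow\ell^\infty$ inclusion that underlies $\cB^s_{p,p}\hookrightarrow\cH^{s,p}$. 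Using doubling and the lower mass bound $\mu(B_r(x))\ge Cr^Q$ of Lemma \ref{lemdoub} to replace the double average $a_k(x)$ by the single average $\fint_{B_{2^{-k}}(x)}|f(x)-f(y)|^p\,d\mu(y)$, and recognizing the resulting dyadic sum as a discretization of the equivalent Besov seminorm $[f]_{\cB^s_p}\simeq[f]_{\cB^s_{p,p}}$ recalled above, one concludes $\Norm{g}{L^p}\le C[f]_{\cB^s_{p,p}}$. The substantive point is the pointwise gradient inequality, i.e. the telescoping estimate, which is where the geometry of $(K,\rho,\mu)$ (doubling together with the lower mass bound) is used; this is exactly what \cite[Lemma $6.1$]{gogatishvili2010interpolation} supplies, so that modulo invoking it the remaining verifications are routine.
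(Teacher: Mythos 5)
Your proposal is correct and takes essentially the same route as the paper: the paper's proof simply extracts from the proof of \cite[Lemma $6.1$]{gogatishvili2010interpolation} the quantitative fact that every $u\in\cB^s_{p,p}(K,\rho,\mu)$ admits a Haj\l{}asz $s$-gradient $g\in\cD^p_s(u)$ with $\Norm{g}{L^p}\le C[u]_{\cB^s_{p,p}}$ (with $C$ depending only on the doubling constant) and then adds $\Norm{u}{L^p}$ to both sides, which is exactly your strategy. The only difference is that you reconstruct the internals of that lemma (sharp maximal function, telescoping, dyadic discretization) instead of citing them; note in passing that only the doubling property is needed there, so your appeal to the lower mass bound of Lemma \ref{lemdoub} is superfluous.
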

Actually, the statement of \cite[Lemma $6.1$]{gogatishvili2010interpolation} only refers to the inclusion of $\cB_{p,p}^s(K,\rho,\mu)$ in $\cH^{s,p}(K,\rho,\mu)$. However, in the proof, it is shown that there exists a constant $C>0$, depending only on the doubling constant of $\mu$, such that for any $u \in \cB_{p,p}^s(K,\rho,\mu)$ there exists a $g \in \cD_s^p(u)$ such that $[u]_{\cH^{s,p}}\le \Norm{g}{L^p}\le C [u]_{\cB_{p,p}^s}$. Summing on both sides $\Norm{u}{L^p}$ one has the continuous embedding.\\
As a Corollary of the previous two embedding theorems we have the following
\begin{cor}\label{corHj}
	Let $(K,\rho,\mu)$ be a doubling compact metric measure space and $p>\frac{Q}{s}$. Then $\cB^s_{p,p}(K,\rho,\mu)$ embeds continuously in $L^\infty$.
\end{cor}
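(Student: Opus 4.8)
The plan is to obtain the result purely by transitivity of continuous embeddings, chaining the two results immediately preceding the statement. First I would invoke the embedding theorem asserting that $\cB^s_{p,p}(K,\rho,\mu)$ embeds continuously into $\cH^{s,p}(K,\rho,\mu)$; this furnishes a constant $C_1>0$, depending only on the doubling constant of $\mu$, such that $\Norm{u}{\cH^{s,p}}\le C_1 \Norm{u}{\cB^s_{p,p}}$ for every $u \in \cB^s_{p,p}(K,\rho,\mu)$. Crucially, this embedding holds on any doubling compact metric measure space and imposes no restriction on $p$ beyond $p\in[1,\infty)$, so the hypotheses of the Corollary already suffice to apply it.

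Next I would invoke the Proposition, which states that on a doubling compact metric measure space $\cH^{s,p}(K,\rho,\mu)$ embeds continuously into $L^\infty$ precisely when $p>\frac{Q}{s}$, where $Q$ is the exponent produced by Lemma \ref{lemdoub}. This yields a constant $C_2>0$ with $\Norm{u}{L^\infty}\le C_2 \Norm{u}{\cH^{s,p}}$ for every $u \in \cH^{s,p}(K,\rho,\mu)$. Since the hypothesis $p>\frac{Q}{s}$ of the Corollary is exactly the condition required here, and since $\cB^s_{p,p}(K,\rho,\mu)\subseteq \cH^{s,p}(K,\rho,\mu)$ by the first embedding, the Proposition applies to every $u \in \cB^s_{p,p}(K,\rho,\mu)$.

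Finally, composing the two inequalities gives
\begin{equation*}
\Norm{u}{L^\infty}\le C_2 \Norm{u}{\cH^{s,p}}\le C_1 C_2 \Norm{u}{\cB^s_{p,p}}
\end{equation*}
for every $u \in \cB^s_{p,p}(K,\rho,\mu)$, which is the claimed continuous embedding with constant $C_1 C_2$. I expect no genuine analytic obstacle here, since both component embeddings have already been established; the only point demanding care is the bookkeeping of hypotheses, namely verifying that the doubling assumption and the threshold $p>\frac{Q}{s}$ (with the same $Q$ fixed after Lemma \ref{lemdoub}) are shared by both intermediate results and are implied by those of the Corollary. The real content of the statement resides in the two preceding theorems — the fractional Morrey estimate underlying the Proposition and the Besov-to-Haj\l{}asz comparison — rather than in the Corollary itself, which is a formal consequence.
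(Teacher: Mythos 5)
Your proposal is correct and coincides with the paper's own argument: the Corollary is presented there precisely as an immediate consequence of the two preceding embedding theorems, composed by transitivity exactly as you do, with the hypotheses (doubling, compact, $p>\frac{Q}{s}$) checked in the same way. One small wording caveat: the paper's Proposition asserts $p>\frac{Q}{s}$ only as a \emph{sufficient} condition for $\cH^{s,p}\hookrightarrow L^\infty$, not a characterization as your ``precisely when'' suggests, but since you use only that direction the proof is unaffected.
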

\section{Lipschitz spaces on compact metric measure spaces}
\subsection{Notations for Lipschitz spaces}
Let us now introduce some notations and basic definitions on Lipschitz spaces defined on compact metric spaces. Let $(K,\rho)$ be a compact metric space. Without loss of generality, we can suppose that $\mu$ is a probability measure. Moreover, let us call $D=\text{diam}(K)$.\\
We will denote:
\begin{itemize}
	\item $\text{Lip}(K,\rho)=\left\{f:K\to \R: [f]_{1}:=\sup\limits_{x\not = y \in K} \frac{|f(x)-f(y)|}{\rho(x,y)}<+\infty \right\}$
	\item $\text{lip}(K,\rho)=\left\{f \in \text{Lip}(K,\rho): \lim\limits_{\rho(x,y)\to 0} \frac{|f(x)-f(y)|}{\rho(x,y)}=0 \right\}$.
\end{itemize}
The first of those two spaces is a Banach space with the norm given by
\begin{equation}
\|f\|_{1}:=\max\{[f]_{1},\|f\|_\infty\}
\end{equation}
where $\|f\|_\infty=\max_{x \in K}|f(x)|$.\\
Let us also introduce the H\"older spaces for $\alpha \in (0,1)$:
\begin{itemize}
	\item $\text{Lip}_\alpha(K,\rho)=\left\{f:K\to \R: [f]_{\alpha}:=\sup\limits_{x\not = y \in K} \frac{|f(x)-f(y)|}{\rho(x,y)^{\alpha}}<+\infty \right\}$
	\item $\text{lip}_\alpha(K,\rho)=\left\{f \in \text{Lip}_\alpha(K,\rho): \lim\limits_{\rho(x,y)\to 0} \frac{|f(x)-f(y)|}{\rho(x,y)^\alpha}=0 \right\}$.
\end{itemize}
The first of those two spaces is a Banach space with the norm given by
\begin{equation}
\|f\|_{\alpha}:=\max\{[f]_\alpha,\|f\|_\infty\}.
\end{equation}
While $lip(K,\rho)$ could coincide with the space of the constant functions, it holds $Lip(K,\rho) \subset lip_\alpha(K,\rho)$, hence $lip_\alpha(K,\rho)$ cannot be trivial.\\
Actually the spaces $Lip_\alpha(K,\rho)$ are particular cases of Lipschitz spaces. Indeed we have $Lip_\alpha(K,\rho)=Lip(K,\rho^\alpha)$. In the following we will set $Lip_1(K,\rho):=Lip(K,\rho)$.
\subsection{Embedding of $Lip_\alpha$ in $\cB^s_{p,p}$}
Let us now focus on compact metric measure spaces $(K,\rho,\mu)$. The following result exhibit the relation between $Lip(K,\rho)$ and $\cB^s_{p,p}(K,\rho,\mu)$ ad actually holds whenever $\mu$ is a fully supported finite measure on $(K,\rho)$. However, to fix the ideas, let us consider $(K,\rho,\mu)$ as a doubling compact metric measure space.
\begin{prop}
	Consider $\alpha \in (0,1]$. Then the space $Lip_\alpha(K,\rho)$ continuously embeds in $\cB^s_{p,p}$ for $s \in (0,\alpha)$ and $p \in [1,+\infty)$.
\end{prop}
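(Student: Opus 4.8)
The plan is to verify directly that the two ingredients of the Besov norm, namely $\|f\|_{L^p}$ and the seminorm $[f]_{\cB^s_{p,p}}$, are each controlled by $\|f\|_{\alpha}$. Since $(K,\rho)$ is compact and $f \in Lip_\alpha(K,\rho)$, the function $f$ is continuous, hence bounded, so that $\|f\|_{L^p} \le \|f\|_\infty \, \mu(K)^{1/p} = \|f\|_\infty \le \|f\|_\alpha$, using that $\mu$ is a probability measure. Thus the whole difficulty lies in bounding the seminorm. The key device is to split the radial integral $\int_0^{+\infty} \frac{dr}{r}(\cdots)$ at $r = D := \diam(K)$, estimating the integrand by the H\"older quotient for small radii and by the supremum norm for large radii; the hypothesis $s \in (0,\alpha)$ is precisely what makes each of the two pieces converge.

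For $r \le D$ I would use that whenever $y \in B_r(x)$ one has $\rho(x,y) < r$, hence $|f(x)-f(y)| \le [f]_\alpha\, \rho(x,y)^\alpha \le [f]_\alpha\, r^\alpha$. Since this bound is independent of $y$, the inner average is trivially dominated, giving $\fint_{B_r(x)} \frac{|f(x)-f(y)|^p}{r^{sp}}\,d\mu(y) \le [f]_\alpha^p\, r^{(\alpha-s)p}$, and integrating in $d\mu(x)$ over the probability space $K$ preserves this bound. The contribution to the seminorm is then $\int_0^{D} [f]_\alpha^p\, r^{(\alpha-s)p-1}\,dr$, which is finite exactly because $(\alpha - s)p > 0$ makes the integrand integrable at the origin; this is where the condition $s < \alpha$ enters.

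For $r > D$ the naive H\"older estimate diverges, and this is the main point to handle with care: here I would instead exploit compactness, observing that $r > D$ forces $B_r(x) = K$ and $\mu(B_r(x)) = 1$, so the average becomes an integral over all of $K$, which can be bounded crudely by $|f(x)-f(y)| \le 2\|f\|_\infty$. This yields a contribution $\int_D^{+\infty} (2\|f\|_\infty)^p\, r^{-sp-1}\,dr$, finite because $sp > 0$; this is where $s > 0$ enters. Adding the two regimes gives $[f]_{\cB^s_{p,p}}^p \le C_1\, [f]_\alpha^p + C_2\, \|f\|_\infty^p$ with $C_1, C_2$ depending only on $D, s, p, \alpha$, whence $[f]_{\cB^s_{p,p}} \le C \|f\|_\alpha$ after taking $p$-th roots and using subadditivity. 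Combined with the $L^p$ bound this gives $\|f\|_{\cB^s_{p,p}} \le C \|f\|_\alpha$, i.e. the claimed continuous embedding. The only genuine obstacle is the behaviour at large radii, resolved by the compactness of $K$ through the identity $B_r(x)=K$; everything else is a routine estimate.
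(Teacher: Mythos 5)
Your proof is correct and follows essentially the same route as the paper's: split the radial integral at $r=D=\diam(K)$, bound the integrand by the H\"older quotient $[f]_\alpha^p r^{(\alpha-s)p}$ for $r\le D$ and by $(2\|f\|_\infty)^p r^{-sp}$ for $r>D$, with $s<\alpha$ and $s>0$ ensuring convergence of the respective pieces. Your write-up is in fact slightly more careful than the paper's (e.g.\ noting $B_r(x)=K$ for $r>D$ and spelling out the final subadditivity step), but the argument is the same.
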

\begin{proof}
	To prove that $\text{Lip}_\alpha(K,\rho)$ embeds continuously in $X$, it is necessary to assume $s<\alpha$.\\
	Indeed, if so, assume that $C=\|f\|_1$. Then $\|f\|_{L^p}\le\|f\|_{L^\infty}\le C$ and \begin{multline*}\left[\int_0^{+\infty} \frac{dr}{r} \int_K\fint_{B_r(x)} \frac{|f(x)-f(y)|^p}{r^{sp}}\,d\mu(y)\,d\mu(x)\right]^{1/p}  \le \\ \le \left[\int_0^{D} \frac{1}{r} C^p r^{(\alpha-s)p}dr+\int_D^{+\infty} 2C^p \,\frac{dr}{r^{1+sp}} \right]^{1/p} \le kC\end{multline*}
	where in the first integral the idea was using the fact that $C$ bounds the Lipschitz constant, while in the second one the fact that $C$ bounds the $L^\infty$ norm of $f$ was used.\\
\end{proof}
\subsection{Approximation property for Lipschitz spaces}
Here we want to show an approximation property in $Lip(K,\rho)$, i. e. for any function $f \in  Lip(K,\rho)$ we want to find a sequence of functions in a suitable subspace that pointwise converges towards $f$. To do this, we need to introduce the following \emph{separation} assumption, that is shown in \cite{Hanin} to be equivalent to the isometry between $(M(K))^c$ and $(lip(K,\rho))^*$ see also \cite{berninger2003lipschitz}.
\begin{assumptionH}
For any $f\in Lip(K,\rho)$, $A$ a finite subset of $K$ and $C>1$ real constant, there exists a function $g\in lip(K,\rho)$ such that $g|_A\equiv f|_A$ and $\|g\|\le C\|f\|$.
\end{assumptionH}
Hence we have the following Theorem.
\begin{thm}
	Let us suppose \textsc{\textbf{Assumption H}} holds. Let $f\in Lip(K,\rho)$. There is a sequence $\{f_n\}_{n\in\N}\subset lip(K,\rho)$ pointwise converging to $f$ and such that $\sup_{n\in\N}\|f_n\|_1\le\|f\|_1$.
\end{thm}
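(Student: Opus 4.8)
The plan is to build the approximating sequence by invoking \textbf{Assumption H} on an increasing family of finite sets that exhausts a dense subset of $K$, and then to rescale slightly so that the norm bound becomes the required sharp inequality rather than one with a constant.

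First I would use the compactness (hence separability) of $(K,\rho)$ to fix a sequence $\{x_k\}_{k\in\N}$ dense in $K$, and set $A_n=\{x_1,\dots,x_n\}$. We may assume $f\not\equiv 0$, the constant case $f\equiv 0$ being trivial. Choosing reals $C_n>1$ with $C_n\downarrow 1$ (for instance $C_n=1+\frac1n$), \textbf{Assumption H} applied to $f$, $A_n$ and $C_n$ yields $g_n\in lip(K,\rho)$ with $g_n|_{A_n}\equiv f|_{A_n}$ and $\|g_n\|_1\le C_n\|f\|_1$. The key normalization step is then to set $f_n:=g_n/C_n$, which still lies in $lip(K,\rho)$ and satisfies $\|f_n\|_1=\tfrac1{C_n}\|g_n\|_1\le\|f\|_1$, so that $\sup_{n\in\N}\|f_n\|_1\le\|f\|_1$ holds at once. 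On $A_n$ we now only have $f_n=f/C_n$, but since $C_n\to1$ this discrepancy will vanish in the limit.

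It remains to prove the pointwise convergence $f_n(x)\to f(x)$ for every $x\in K$. The decisive observation is that the family $\{f_n\}$ is equi-Lipschitz, since $[f_n]_1\le\|f_n\|_1\le\|f\|_1$ for all $n$. Fixing $x\in K$ and $\varepsilon>0$, I would pick $x_k$ from the dense sequence with $\rho(x,x_k)<\varepsilon$; then for every $n\ge k$ we have $x_k\in A_n$, hence $f_n(x_k)=f(x_k)/C_n$, and the triangle inequality gives
\begin{equation*}
|f_n(x)-f(x)|\le [f_n]_1\,\rho(x,x_k)+|f_n(x_k)-f(x_k)|+[f]_1\,\rho(x_k,x).
\end{equation*}
The first and third terms are each bounded by $\|f\|_1\varepsilon$, while the middle term equals $|f(x_k)|\bigl(1-\tfrac1{C_n}\bigr)\le\|f\|_\infty\bigl(1-\tfrac1{C_n}\bigr)\to0$ as $n\to\infty$. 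Thus $\limsup_{n\to\infty}|f_n(x)-f(x)|\le 2\|f\|_1\varepsilon$, and letting $\varepsilon\to0$ concludes the convergence.

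I expect the main (and essentially only) obstacle to be obtaining the \emph{sharp} constant $1$ in the norm estimate: \textbf{Assumption H} is available only with a loss $C>1$, so a direct choice $f_n=g_n$ would merely give $\sup_n\|g_n\|_1\le\sup_nC_n\|f\|_1>\|f\|_1$. The rescaling $f_n=g_n/C_n$ both removes this loss and, since it perturbs the interpolated values only by the factor $1/C_n\to1$, is harmless for the pointwise limit; the equi-Lipschitz bound is then exactly what upgrades agreement on a dense set to convergence at every point of $K$.
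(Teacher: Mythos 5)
Your proof is correct and follows essentially the same route as the paper: both apply \textsc{\textbf{Assumption H}} on an increasing family of finite sets whose union is dense, with constants $C_n\downarrow 1$, rescale to $f_n=g_n/C_n$ to get the sharp bound $\|f_n\|_1\le\|f\|_1$, and use the uniform Lipschitz bound to upgrade agreement (up to the factor $1/C_n$) on the dense set to pointwise convergence on all of $K$. The only difference is cosmetic --- the paper builds the finite sets $A_n$ by a totally-bounded covering argument with quantitative mesh $2^{-n}$, whereas you take initial segments of a dense sequence --- and your explicit three-term estimate actually spells out the final step that the paper leaves as ``easily extend.''
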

\begin{proof}
	Since $K$ is totally bounded, it can be covered by a finite number of balls of radius $1$, so let's call $A_0$ the set of centers of these balls. Suppose now that we have defined the set $A_n$ and consider the set $K_{n+1}:=K\backslash\bigcup_{x\in A_n}B_{2^{-n-1}}(x)$. Since $K_{n+1}$ is a compact and thus totally bounded subset of $K$, it can be covered by balls of radius $2^{-n-1}$, so if we denote by $B_{n+1}$ the corresponding set of centers, we can take $A_{n+1}:=A_n\cup B_{n+1}$. This ensures that every point of $K$ has distance less that $2^{-n}$ from the points in $A_n$. We also take $C_n:=1+\frac{1}{n+1}$.\\
	Let $g_n$ be the function from \textsc{\textbf{Assumption H}} obtained by considering $A=A_n$ and $C=C_n$ and define $f_n:=\frac{g_n}{C_n} \in lip(K,\rho)$. We have that $\|f_n\|_\alpha\le\|f\|_\alpha$, so the only thing that's left to show is the pointwise convergence, which implies weak convergence in $X$. We notice that, by definition of $f_n$, it is enough to show that $g_n\to f$ pointwise. If we define $A_\infty:=\bigcup_{n\in\N} A_n$ we see that $A_\infty$ is dense and for all $x\in A_\infty$ the sequence $g_n(x)$ eventually becomes constantly equal to $f(x)$. By using the Lipschitz property we can easily extend the pointwise convergence to the whole $K$.\\
\end{proof}
Let us also observe that in the case of $Lip_\alpha(K,\rho)$, \textsc{\textbf{Assumption H}} can be actually improved \cite[Proposition 3]{Hanin}.
\begin{prop}
	Let $(K,\rho)$ be a compact metric space, $\alpha\in(0,1)$, $f\in Lip_\alpha(K)$, $A$ a finite subset of $K$ and $C>1$ a real constant. Then there exists a function $g\in Lip_1(K)$ such that $g|_A\equiv f|_A$ and $\|g\|_\alpha\le C\|f\|_\alpha$.\\
\end{prop}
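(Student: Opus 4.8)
The plan is to construct $g$ as an explicit McShane-type (infimal convolution) extension of $f|_A$, built not with the natural modulus $t\mapsto t^\alpha$ but with a modulus $\omega$ that is genuinely \emph{Lipschitz} near the origin (forcing $g\in Lip_1(K)$) while remaining comparable to $t^\alpha$ (controlling the H\"older norm), and then to truncate to fix the sup norm. Write $L=[f]_\alpha$ and, using crucially that $A$ is \emph{finite}, set $\delta=\min_{a\neq b\in A}\rho(a,b)>0$ and $D=\diam(K)$. The conceptual point is that agreement on $A$ will only require $\omega$ to dominate $t^\alpha$ at distances $\ge\delta$, which leaves room to make $\omega$ linear, hence Lipschitz, on $[0,\delta]$.

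The heart of the argument, and the step I expect to be the main obstacle, is producing a concave, nondecreasing, Lipschitz function $\omega:[0,\infty)\to[0,\infty)$ with $\omega(0)=0$ such that (i) $\omega(t)\le Ct^\alpha$ for all $t\ge 0$ and (ii) $\omega(t)\ge t^\alpha$ for all $t\ge\delta$. I would take $h(t)=Ct^\alpha$ (concave since $\alpha<1$) and replace it on $[0,\delta]$ by the chord from $(0,0)$ to $(\delta,C\delta^\alpha)$, i.e. $\omega(t)=C\delta^{\alpha-1}t$ on $[0,\delta]$ and $\omega(t)=Ct^\alpha$ on $[\delta,\infty)$. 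A short check shows the slope drops across $t=\delta$ (because $\alpha<1$, the right derivative $C\alpha\delta^{\alpha-1}$ is smaller than the chord slope $C\delta^{\alpha-1}$), so $\omega$ is concave; the linear piece has finite slope, so $\omega$ is Lipschitz; (ii) is immediate since $C>1$; and (i) on $[0,\delta]$ reduces to $\delta^{\alpha-1}t\le t^\alpha$, that is $t\le\delta$. Finally, a concave function vanishing at $0$ is subadditive, so $\omega$ is nondecreasing and subadditive.

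Next I would define $\tilde g(x)=\min_{a\in A}\bigl(f(a)+L\omega(\rho(x,a))\bigr)$. Since $\omega$ is nondecreasing and subadditive, $|\omega(\rho(x,a))-\omega(\rho(y,a))|\le\omega(\rho(x,y))$, so each summand has modulus of continuity $L\omega$; the standard infimum lemma (for each $\varepsilon$ choose a near-minimizing index at $y$) transfers this to the finite minimum, giving $|\tilde g(x)-\tilde g(y)|\le L\omega(\rho(x,y))$ for all $x,y\in K$. For agreement on $A$, fix $b\in A$: the index $a=b$ yields value $f(b)$, while for every other $a\in A$ one has $f(b)-f(a)\le L\rho(a,b)^\alpha\le L\omega(\rho(a,b))$ by (ii) (as $\rho(a,b)\ge\delta$), so no competing term drops below $f(b)$ and $\tilde g(b)=f(b)$.

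To finish, I would truncate: set $g=T\circ\tilde g$, where $T$ is the $1$-Lipschitz truncation onto $[-\|f\|_\infty,\|f\|_\infty]$. Post-composing with a $1$-Lipschitz map does not enlarge the modulus of continuity, so $|g(x)-g(y)|\le L\omega(\rho(x,y))$ persists; since $\omega$ is Lipschitz this yields $g\in Lip_1(K)$, and since $\omega(t)\le Ct^\alpha$ it yields $[g]_\alpha\le\sup_{t>0}L\omega(t)/t^\alpha\le CL=C[f]_\alpha$. Truncation forces $\|g\|_\infty\le\|f\|_\infty$ and leaves $g|_A=f|_A$ unchanged because $|f|\le\|f\|_\infty$ on $A$. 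Combining, $\|g\|_\alpha=\max\{[g]_\alpha,\|g\|_\infty\}\le\max\{C[f]_\alpha,\|f\|_\infty\}\le C\|f\|_\alpha$ (using $C>1$), which is exactly the assertion. The only genuinely delicate point is the modulus construction of the second paragraph; everything downstream is a routine verification once $\omega$ is in hand.
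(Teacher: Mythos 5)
Your proof is correct. Note first that the paper itself offers no argument for this statement: it is quoted verbatim from Hanin \cite[Proposition 3]{Hanin}, so there is no in-paper proof to compare against, and your construction stands as a genuinely independent, self-contained justification. The argument is sound at every step: the glued modulus $\omega(t)=C\delta^{\alpha-1}t$ on $[0,\delta]$, $\omega(t)=Ct^\alpha$ on $[\delta,\infty)$ is continuous at $\delta$, concave (the slope drops from $C\delta^{\alpha-1}$ to $C\alpha\delta^{\alpha-1}$), hence subadditive and nondecreasing, and your verifications of $\omega(t)\le Ct^\alpha$ everywhere and $\omega(t)\ge t^\alpha$ for $t\ge\delta$ are exactly right; the McShane-type minimum then inherits the modulus $L\omega$ by the standard finite-infimum lemma, agreement on $A$ follows precisely because competing centers are at distance $\ge\delta$, and the $1$-Lipschitz truncation fixes the sup norm without disturbing either the modulus or the values on $A$. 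The final norm bookkeeping, including the use of $C>1$ to absorb $\max\{C[f]_\alpha,\|f\|_\infty\}$ into $C\|f\|_\alpha$, is correct. What your route buys is transparency: an explicit formula for the extension, no appeal to duality or to the Kantorovich--Rubinstein machinery surrounding Hanin's treatment, and a clear isolation of where finiteness of $A$ enters (namely, only through $\delta>0$, which is what creates room to linearize the modulus near the origin). Two cosmetic points you may wish to add for completeness: the degenerate case $\mathrm{card}(A)\le 1$, where $\delta$ is a minimum over the empty set, should be dispatched separately (it is trivial, e.g.\ via a constant function or by choosing any $\delta>0$, since no competing centers exist), and one could remark that $g\in Lip_1(K)\subset lip_\alpha(K,\rho)$, which is why this statement indeed strengthens Assumption H.
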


\section{The $o$--$O$ structure of $(lip(K,\rho),Lip(K,\rho))$}
Now we are ready to show the main result of the paper.
\begin{thm}
	Let $(K,\rho,\mu)$ be a doubling compact metric measure space. Then the pair $(lip(K,\rho),Lip(K,\rho))$ exhibit a $o$--$O$ structure if and only if \textsc{\textbf{Assumption H}} holds. Supposing that \textsc{\textbf{Assumption H}} holds, as a consequence we have the following properties:
	\begin{itemize}
		\item $(lip(K,\rho))^{**}\simeq Lip(K,\rho)$ isometrically;
		\item for $f\in Lip(K,\rho)$ the following distance formula holds:
		\begin{equation}
		dist_{Lip(K,\rho)}(f,lip(K,\rho))=\limsup_{\rho(x,y)\to 0}\frac{|f(x)-f(y)|}{\rho(x,y)};
		\end{equation}
		\item $lip(K,\rho)$ is an $M$-ideal in $Lip(K,\rho)$, that is
		\begin{equation}
		(Lip(K,\rho))^*\simeq (lip(K,\rho))^*\oplus_1 (lip(K,\rho))^\perp,
		\end{equation}
		\item $(lip(K,\rho))^*$ is the strongly unique predual of $Lip(K,\rho)$.
	\end{itemize}
\end{thm}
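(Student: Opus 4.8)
The plan is to exhibit $(lip(K,\rho),Lip(K,\rho))$ as a concrete instance of the abstract $o$--$O$ structure of Section 2, realising $E=Lip(K,\rho)$ and $E_0=lip(K,\rho)$, and then to extract the four listed properties from Perfekt's theorem; the whole equivalence with \textsc{Assumption H} is mediated by the identity \textsc{Assumption H} $\Leftrightarrow$ \textsc{Assumption AP}. First I would fix the ambient reflexive separable space. Choose $s\in(0,1)$ and $p>\max\{2,Q/s\}$, where $Q$ is the exponent of Lemma \ref{lemdoub}, and set $X=\cB^s_{p,p}(K,\rho,\mu)$. This $X$ is reflexive and separable by the Clarkson/Milman--Pettis argument of Section 3, it contains $Lip(K,\rho)$ continuously by the embedding Proposition (as $s<1$), and by Corollary \ref{corHj} together with the Morrey estimate every element of $X$ admits a Hölder continuous representative, which we always select. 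Take $Y=\R$ and let $\cL$ be the union of the difference quotients $L_{(x,y)}f=(f(x)-f(y))/\rho(x,y)$, indexed by the open set $V=\{(x,y)\in K\times K:x\neq y\}$, and the point evaluations $L_x f=f(x)$, indexed by $K$. Each functional is bounded on $X$: the $L_x$ because $X\hookrightarrow L^\infty$, and each $L_{(x,y)}$ because for a fixed pair $\rho(x,y)>0$ is fixed and the Hölder estimate bounds $|f(x)-f(y)|$ by $\Norm{f}{X}$. I would topologise $\cL$ as the disjoint union $K\sqcup V$, with $K$ in its compact metric topology and $V$ in the subspace topology of $K\times K$; this union is Hausdorff, locally compact and $\sigma$-compact (since $V=\bigcup_n\{\rho\ge 1/n\}$ is a countable union of compact sets), and $T_f$ is continuous for every $f\in X$ because $f$ and $\rho$ are continuous and $\rho>0$ on $V$.

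Next I would verify the two defining identities. Since $\sup_{L\in\cL}|Lf|=\max\{[f]_1,\Norm{f}{\infty}\}=\Norm{f}{1}$, the set $\{f\in X:\sup_L|Lf|<\infty\}$ is exactly $Lip(K,\rho)$ with its norm, so $E=Lip(K,\rho)$. For $E_0$ I observe that $K$ is compact and that the compact subsets of $V$ are precisely those bounded away from the diagonal; hence a net in $\cL$ tends to the point at infinity of the Alexandrov compactification if and only if it eventually lies in $V$ with $\rho(x,y)\to 0$. Consequently $\limsup_{L\to\infty}|Lf|=\limsup_{\rho(x,y)\to 0}|f(x)-f(y)|/\rho(x,y)$, so $E_0$ coincides with $lip(K,\rho)$. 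This construction uses only the doubling hypothesis, so it is unconditional, and the arithmetic content of the equivalence therefore resides entirely in \textsc{Assumption AP}.

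For the direction \textsc{Assumption H} $\Rightarrow$ $o$--$O$ structure I would invoke the approximation Theorem of Section 4: under \textsc{H}, every $f\in Lip(K,\rho)$ is the pointwise limit of a sequence $\{f_n\}\subset lip(K,\rho)$ with $\sup_n\Norm{f_n}{1}\le\Norm{f}{1}$. The $f_n$ are bounded in $X$ and the point evaluations lie in $X^*$, so pointwise convergence upgrades to weak convergence $f_n\rightharpoonup f$ in $X$; this is \textsc{Assumption AP}. Perfekt's theorem then delivers at once the isometry $(lip(K,\rho))^{**}\simeq Lip(K,\rho)$, the distance formula (which is exactly $\limsup_{L\to\infty}|Lf|=\mathrm{dist}_{Lip}(f,lip)$ translated through the computation above), the $M$-ideal splitting, and the strong uniqueness of the predual. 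For the converse I would use that a working structure, i.e.\ one satisfying \textsc{AP}, forces the biduality $(lip(K,\rho))^{**}\simeq Lip(K,\rho)$; by Hanin's result quoted in Section 4 this isometry is equivalent to the separation property, that is to \textsc{Assumption H}, closing the equivalence (and showing $\textsc{H}\Leftrightarrow\textsc{AP}$).

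I expect the main obstacle to be twofold. The first is calibrating the topology on $\cL$ so that ``$L\to\infty$'' is identified precisely with ``$\rho(x,y)\to 0$'' while the compact block of point evaluations remains away from infinity, and simultaneously guaranteeing that the functionals are genuinely defined and continuous on all of $X$; this is exactly why the Hölder continuity supplied by the Besov-to-Haj\l{}asz embedding and the choice $p>Q/s$ are indispensable rather than cosmetic. The second, more conceptual, difficulty is the ``only if'' direction: since the bare data exist unconditionally, one cannot read \textsc{H} off the structure directly, and must instead route back through \textsc{AP}, Perfekt's biduality, and Hanin's duality characterisation.
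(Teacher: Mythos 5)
Your proposal is correct, and its overall architecture coincides with the paper's: the Besov space $\cB^s_{p,p}$ of a doubling measure as the reflexive separable ambient space $X$, Assumption AP extracted from the Section 4 approximation theorem together with weak compactness of bounded sequences in $X$, and the converse implication routed through Perfekt's biduality and Hanin's equivalence. The genuine difference is in the realisation of the o--O data, which is the heart of the proof. The paper takes $Y=\R\times\R$ and a single three-parameter family indexed by the product $V\times K$, namely $L_{x,y,z}f=\bigl(\frac{f(x)-f(y)}{\rho(x,y)},\frac{\rho(x,y)}{D}f(z)\bigr)$, where the damping factor $\rho(x,y)/D$ is exactly what forces the evaluation component to vanish as $L\to\infty$, so that $E_0$ comes out as $lip(K,\rho)$ while the supremum over all triples still returns $\max\{[f]_1,\Norm{f}{\infty}\}$. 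You instead take $Y=\R$ and the disjoint union $K\sqcup V$ of pure point evaluations and pure difference quotients: the evaluations sit on a compact block of the index space and therefore never approach the point at infinity, so the identification of $L\to\infty$ with $\rho(x,y)\to0$ is achieved topologically rather than by damping. Both devices are legitimate; yours makes the computation of $\limsup_{L\to\infty}\Norm{Lf}{Y}$ more transparent (and, if carried into the atomic decomposition of Section 6, would yield atoms supported on at most $2$ rather than $3$ points), while the paper's keeps one formula for all operators on a connected index space. You are also more careful on two small points: you require $p>\max\{2,Q/s\}$, which the Clarkson/Milman--Pettis reflexivity argument actually needs although the paper writes only $p>Q/s$; and you fix H\"older continuous representatives once and for all (via the Besov-to-Haj\l{}asz embedding and the Morrey estimate), so that the operators and the maps $T_f$ are defined and continuous on all of $X$ directly, where the paper works on $\overline{Lip(K,\rho)}^{\cB^s_{p,p}}$ and extends by a density/diagonal argument. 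Finally, your reading of the ``only if'' direction --- the bare o--O data exist unconditionally under doubling alone, so the equivalence with Assumption H must pass through AP and the resulting biduality --- is precisely the paper's implicit interpretation of its own statement, made explicit.
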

\begin{proof}
	First of all we need a reflexive and separable Banach space $X$ in which we can embed $\text{Lip}(K,\rho)$. Thus fix $s \in (0,1)$ and $p>\frac{Q}{s}$ and consider $X=\overline{Lip(K,\rho)}^{\cB^s_{p,p}}$, since we have shown that $\cB^s_{p,p}$ is reflexive, separable and $Lip(K,\rho)$ continuously embeds in it, so the closure of $Lip$ in $\cB_{p,p}^s$ also has these properties and will be our $X$.\\
	As Banach space $Y$ let us choose $\R\times\R$, endowed with the $L^\infty$ norm, i.e. $\|(x,y)\|_{\R\times\R}=\max\{|x|,|y|\}$.\\
	Our family of operators will be the following:
	$$\mathcal{L}=\left\{L_{x,y,z}:f \in X\mapsto \left(\frac{f(x)-f(y)}{\rho(x,y)},\frac{\rho(x,y)}{D} f(z)\right) \in \R\times\R,x,y,z \in K, x\not=y\right\}.$$
	It is clear that these operators are linear.\\
	If we set $V:=K^2\backslash \text{Diag}(K^2)$, we can give $\mathcal{L}$ the product topology of $V\times K$, where on $V$ we have the trace topology induced by the topology on $K^2$. In the following we will identify $\cL$ with $W:=V \times K$.\\
	Since $K$ is a compact metric space, it is $\sigma$-compact, locally compact, Hausdorff and separable and so is also $V$. These properties easily transfer to $\mathcal{L}$, being it a product space. In particular an exhaustive sequence $K_n$ of compact subsets of $\mathcal{L}$ is given by
	\begin{equation*}
	K_n=\left\{(x,y)\in K^2: \ \rho(x,y) \ge \frac{1}{n}\right\}
	\end{equation*}
	hence taking the limit as $L \to \infty$ is equivalent to taking the limit as $\rho(x,y)\to 0$.
	\medskip
	Now we need to show the continuity of the maps $T_f: L\in \mathcal{L} \mapsto L(f) \in \R \times \R$ for $f\in X$. We notice that it is enough to prove this for $f\in Lip(K,\rho)$, since we can use a diagonal argument, combined with the boundedness of the operators themselves, to extend this to the whole $X$.\\
	This is easy because $(x_n,y_n,z_n)\to (x,y,z)$ as $n$ goes to infinity implies $\rho(x_n,y_n)\to \rho(x,y)$ and $\rho(z_n,z)\to 0$, so using the continuity of $f$ and $\rho$ we easily obtain $$\max\left\{\left|\frac{f(x_n)-f(y_n)}{\rho(x_n,y_n)} - \frac{f(x)-f(y)}{\rho^\alpha(x,y)}\right|,|\frac{\rho(x_n,y_n)}{D} f(z_n)-\frac{\rho(x,y)}{D} f(z)|\right\}\to 0$$
	proving that $T_f$ is continuous for any $f \in \text{Lip}(K,\rho)$.\\
	It is easy to observe that
		\begin{multline*}
		\sup_{(x,y,z)\in W}\Norm{L_{x,y,z}f}{\R\times \R}=\sup_{(x,y,z)\in W}\max\left\{\left|\frac{f(x)-f(y)}{\rho(x,y)}\right|,\frac{\rho(x,y)}{D}|f(z)|\right\}\\=\max\{[f]_1,\Norm{f}{\infty}\}=\Norm{f}{1},
		\end{multline*}
while the $o$-structure for $lip(K,\rho)$ follows from the inequality
\begin{equation*}
\frac{|f(x)-f(y)|}{\rho(x,y)}\le \Norm{L_{x,y,z}f}{\R \times \R} \le \frac{|f(x)-f(y)|}{\rho(x,y)}+\frac{\rho(x,y)}{D}\Norm{f}{L^\infty}.
\end{equation*}
Concerning the continuity of $L_{x,y,z}$, let us recall, from Corollary \ref{corHj}, that there exists a constant $C$ such that $\Norm{f}{\cB_{p,p}^s} \ge \Norm{f}{L^\infty}$. Hence we have
\begin{equation*}
\frac{|f(x)-f(y)|}{\rho(x,y)\Norm{f}{\cB_{p,p}^s}} \le \frac{2}{C\rho(x,y)}
\end{equation*}
while
\begin{equation*}
\frac{\rho(x,y)|f(z)|}{D\Norm{f}{\cB_{p,p}^s}} \le \frac{\rho(x,y)}{CD},
\end{equation*}
thus $L_{x,y,z}:X \to \R \times \R$ is a bounded linear operator.\\
Finally let us observe that we have shown that, supposed that \textsc{\textbf{Assumption H}} holds, for any $f \in Lip(K,\rho)$ there exists a sequence $\{f_n\}_{n \in \N}\subset lip(K,\rho)$ such that $f_n \to f$ point-wise and $\sup_{n \in \N}\Norm{f_n}{1}\le \Norm{f}{1}$, hence, by Banach-Alaoglu theorem, we can extract a subsequence of $f_n$ that weakly converges to $f$ in $X$,  concluding the proof of one implication.\\
Concerning the other implication, let us suppose that the $o$--$O$ structure holds. Then we know that $(lip(K,\rho))^{**}\simeq Lip(K,\rho)$ isometrically. However, in \cite{Hanin} it is shown that such isometry is equivalent to \textsc{\textbf{Assumption H}}, concluding the proof.
\end{proof}
\begin{rmk}
	Let us remark that, since we have shown that \textsc{\textbf{Assumption H}} always holds for $Lip_\alpha(K,\rho)$ whenever $\alpha \in (0,1)$, we have that the pair $(lip_\alpha(K,\rho),Lip_\alpha(K,\rho))$ always exhibits the $o$--$O$ structure.
\end{rmk}
\section{The atomic decomposition of $M(K)$}
Now we are ready to give an atomic decomposition of the space $M(K)$ endowed with the Kantorovich-Rubinstein norm.
\begin{thm}
	Fix $\alpha \in (0,1)$. Let $\mu \in M(K)$. Then there exist a sequence of atomic measures $(\mu_n)_{n \in \N}\subset M(K)$ with $card(supp(\mu_n))\le 3$ and a sequence $(\gamma_n)_{n \in \N}\in \ell^1(\R)$ with $\gamma_n \ge 0$ such that
	\begin{equation*}
	\mu=\sum_{n=1}^{+\infty}\gamma_n \mu_n
	\end{equation*}
	where the convergence is intended in the Kantorovich-Rubinstein norm with respect to $\rho^\alpha$. Moreover there is $C>0$ such that
	\begin{equation}
	C\sum_{n=1}^{+\infty}\gamma_n \le \Norm{\mu}{M(K)}\le \sum_{n=1}^{+\infty}\gamma_n
	\end{equation}
\end{thm}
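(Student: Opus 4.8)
The plan is to specialize the abstract atomic decomposition of Section~2.2 to the $o$--$O$ pair $(lip_\alpha(K,\rho),Lip_\alpha(K,\rho))$, whose $o$--$O$ structure is guaranteed by the main theorem of Section~5 together with the subsequent Remark: since $\alpha\in(0,1)$, \textsc{Assumption H} holds automatically, so the structure is in force without further hypotheses. First I would verify the single standing hypothesis of the abstract theorem, namely that the operator space $(\cL,\tau)$ is separable. This is immediate from the construction: $\cL$ is identified with $W=V\times K$, where $V=K^2\setminus\text{Diag}(K^2)$ carries the trace of the product topology, and since $K$ is a compact metric space it is separable; hence $V$, $W$ and therefore $\cL$ are separable.

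Next I would fix the identification of the predual. Here $E=Lip_\alpha(K,\rho)$, $E_0=lip_\alpha(K,\rho)$, and $Y=\R\times\R$ with the $\ell^\infty$-norm, so that $Y^*=\R\times\R$ with the $\ell^1$-norm. By Hanin's duality, which applies precisely because \textsc{Assumption H} holds for $\rho^\alpha$ with $\alpha\in(0,1)$, the predual $E_*=(lip_\alpha(K,\rho))^*$ is isometrically $(M(K))^c$, the completion of $M(K)$ in the Kantorovich--Rubinstein norm associated with $\rho^\alpha$. In particular every $\mu\in M(K)\subseteq(M(K))^c=E_*$ is an admissible functional for the abstract theorem, which thus produces $(\lambda_n)\in\ell^1(\R_+)$ and normalized atoms $g_n=L_n^*y_n^*/\Norm{L_n^*y_n^*}{E_*}$ with $\Norm{g_n}{E_*}=1$, $\mu=\sum_n\lambda_n g_n$ converging in the KR-norm, and $C\sum_n\lambda_n\le\Norm{\mu}{M(K)}\le\sum_n\lambda_n$.

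The heart of the argument is to read off the explicit form of the $g_n$. Writing $L_n=L_{x_n,y_n,z_n}$ and $y_n^*=(a_n,b_n)\in Y^*$, I would compute the adjoint by testing against an arbitrary $f\in E$: since the $\alpha$-version of the operators is $L_{x,y,z}f=\bigl(\tfrac{f(x)-f(y)}{\rho^\alpha(x,y)},\tfrac{\rho^\alpha(x,y)}{D^\alpha}f(z)\bigr)$, one has $\langle L_{x,y,z}^*(a,b),f\rangle=a\,\tfrac{f(x)-f(y)}{\rho^\alpha(x,y)}+b\,\tfrac{\rho^\alpha(x,y)}{D^\alpha}f(z)$, and because the $E_*$--$E$ pairing is integration of $f$ against a measure, this forces
\begin{equation*}
L_{x,y,z}^*(a,b)=\frac{a}{\rho^\alpha(x,y)}(\delta_x-\delta_y)+\frac{b\,\rho^\alpha(x,y)}{D^\alpha}\,\delta_z,
\end{equation*}
a finitely supported (hence atomic) measure, genuinely in $M(K)$, whose support is contained in $\{x,y,z\}$. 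Consequently each $g_n$, being a scalar normalization of such an adjoint, is an atomic measure with $\mathrm{card}(\mathrm{supp}(g_n))\le 3$; setting $\mu_n=g_n$ and $\gamma_n=\lambda_n$ yields the desired decomposition, and the two-sided estimate transfers verbatim from the abstract theorem.

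I expect the main obstacle to be conceptual rather than computational: one must correctly invoke the isometry $E_*\simeq(M(K))^c$ so that the abstract functionals $L_n^*y_n^*$ are genuinely interpreted as measures, and then confirm that under this identification the adjoints of the difference-quotient operators are exactly the three-point atomic measures displayed above. Once this dictionary is installed, the cardinality bound and both norm inequalities follow automatically, so no further estimates are needed.
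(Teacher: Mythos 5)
Your proposal is correct and follows essentially the same route as the paper: invoke the $o$--$O$ structure of $(lip_\alpha(K,\rho),Lip_\alpha(K,\rho))$ and Hanin's identification $(lip_\alpha(K,\rho))^*\simeq (M(K))^c$, check separability of $(\cL,\tau)$, apply the abstract atomic decomposition theorem, and then compute the adjoints $L_{x,y,z}^*$ explicitly to see that each atom is a measure supported on at most three points. The explicit formula you derive for $L_{x,y,z}^*(a,b)$ is exactly the one in the paper's proof, so there is nothing to add.
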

\begin{proof}
	Since we have shown that the pair $(lip_\alpha(K,\rho),Lip_\alpha(K,\rho))$ admits a $o$--$O$ structure whenever $\alpha \in (0,1)$, we know that $(lip_\alpha(K,\rho))^*$ is the strongly unique predual of $Lip_\alpha(K,\rho)$ and $(lip_\alpha(K,\rho))^*\simeq (M(K))^c$. Moreover, let us observe that the topology on $\cL$ is the one induced by the natural topology on $V \times K \times (0,1]$, hence it is separable. Thus we know that there exist a sequence $(\gamma_n)_{n \in \N} \in \ell^1(\R)$ with $\gamma_n \ge 0$ and a sequence $(\mu_n)_{n \in \N} \subset (M(K))^c$ such that
	\begin{equation*}
	\mu=\sum_{n=1}^{+\infty}\gamma_n \mu_n
	\end{equation*}
	with $\Norm{\mu_n}{M(K),\rho^\alpha}=1$, where with $\Norm{\cdot}{M(K),\rho^\alpha}$ we denote the Kantorovich-Rubinstein norm.\\
	Now recall that $\mu_n=\frac{L_n^*g_n^*}{\gamma_n}$ by definition for some operators $L_n \in \cL$ and for some $g_n^* \in Y^*=\R^2$. Let us observe that for any $g \in \R^2$ and $f \in Lip_\alpha(K,\rho)$ we have
	\begin{equation*}
	\langle L_nf, g \rangle= \frac{f(x_n)-f(y_n)}{\rho(x_n,y_n)^\alpha}g_1+\frac{\rho(x_n,y_n)^{\alpha}}{D^\alpha} f(z_n)g_2
	\end{equation*}
	while
	\begin{equation*}
	\langle f, L_n^*g \rangle= \int_{K}f dL_n^*g.
	\end{equation*}
	From these two relations we have for any $g \in \R^2$
	\begin{equation*}
	L_n^*g=\frac{\delta_{x_n}-\delta_{y_n}}{\rho(x_n,y_n)^\alpha}g_1+\frac{\rho(x_n,y_n)^{\alpha}}{D^\alpha} g_2\delta_{z_n}
	\end{equation*}
	where $\delta_x$ is the Dirac delta measure concentrated in $x$. Hence we can conclude that $\mu_n$ is a purely atomic measure with $card(supp(\mu_n))\le 3$.
\end{proof}
Let us remark that the set of finitely supported measure is dense in $M(K)$ with respect to the Kantorovich-Rubinstein norm, so the fact that $\mu$ could be a non-atomic measure does not contradict our result.
\subsubsection*{Acknowledgements} The first, third and fourth authors are members of the Gruppo Nazionale per l'Analisi Matematica, la Probabilit\'a e le loro Applicazioni (GNAMPA) of the Istituto Nazionale di Alta Matematica (INdAM), while the second author is member of the Gruppo Nazionale per il Calcolo Scientifico (GNCS) of the Istituto Nazionale di Alta Matematica (INdAM). The third author has been also supported by \lq\lq~Sostegno alla Ricerca Locale~\rq\rq , Universit\'a degli studi di Napoli \lq\lq~Parthenope~\rq\rq. 

\bibliographystyle{plain}
\bibliography{references}
\end{document}